\newtheorem{theorem}{Theorem}[section]
\newenvironment{proof}[1][Proof.]{\begin{trivlist}
\item[\hskip 15pt {\itshape #1}]}{\end{trivlist}}
\newenvironment{remark}[1][Remark.]{\begin{trivlist}
\item[\hskip \labelsep {\bfseries #1}]}{\end{trivlist}}
\newcommand{\qed}{\nobreak \ifvmode \relax \else
      \ifdim\lastskip<1.5em \hskip-\lastskip
      \hskip1.5em plus0em minus0.5em \fi \nobreak
      \vrule height0.75em width0.5em depth0.25em\fi}
\newcommand{\at}{\textup{\text{at}}}
\newcommand{\cg}{\textup{\text{cg}}}
\newcommand{\T}{\textup{\text{T}}}
\newcommand{\R}{\mathbb{R}}
\numberwithin{equation}{section}
\title{Analysis of Transition State Theory Rates upon Spatial Coarse-Graining
\thanks{AB acknowledges support from the Department of Defense (DoD) through
the National Defense Science \& Engineering Graduate Fellowship (NDSEG) Program.
ML was supported in part by the NSF PIRE Grant OISE-0967140, NSF Grant
1310835, AFOSR Award FA9550-12-1-0187, and ARO MURI Award W911NF-14-1-0247.
Work at Los Alamos National Laboratory (LANL) was supported by the
United States Department of Energy, Office of Basic Energy Sciences,
Materials Sciences and Engineering Division. LANL is operated by
Los Alamos National Security, LLC, for the National Nuclear Security
Administration of the U.S. DOE under Contract No. DE-AC52-06NA25396.
During a visit to LANL, AB also received partial support from the
Center for Nonlinear Studies (CNLS) through the Laboratory Directed
Research and Development Program, which paid for mathematical development
in this work.
}}
\author[1]{Andrew Binder}
  \affil[1]{School of Mathematics, 206 Church St. SE \newline University of Minnesota,
         Minneapolis, MN 55455, USA \newline \texttt{bind0090@umn.edu}, \texttt{luskin@umn.edu}}
\author[1]{Mitchell Luskin}
\author[2]{Danny Perez}
 \affil[2]{Theoretical Division T-1, Los Alamos National Laboratory \newline Los Alamos,
        NM 87545, USA \newline \texttt{danny\_perez@lanl.gov},\ \texttt{afv@lanl.gov}}
\author[2]{Arthur F. Voter}
\begin{document}

\maketitle

\begin{abstract}
Spatial multiscale methods have established themselves as useful tools
for extending the length scales accessible by conventional statics (i.e., zero
temperature molecular dynamics).  Recently, extensions of these methods, such as
the finite-temperature quasicontinuum (hot-QC) or Coarse-Grained Molecular
Dynamics (CGMD) methods, have allowed for multiscale molecular
dynamics simulations at finite temperature. Here, we assess the
quality of the long-time
dynamics these methods generate by considering canonical transition rates.
Specifically, we analyze the transition state theory (TST) rates in CGMD
and compare them to the corresponding TST rate of the fully atomistic system.
The ability of such an approach to reliably reproduce the TST rate is verified
through a relative error analysis, which is then used to highlight
the major contributions to the error and guide the choice of degrees
of freedom.  Finally, our analytical results are
compared with numerical simulations for the case of a 1-D chain.
\end{abstract}

\section{Introduction}

Molecular dynamics (MD) ---   the direct integration of atomistic equations of
motion --- provides a powerful tool for the study of chemical and
material processes.  Such an approach accurately captures the
physics at the atomic scale and, in principle, enables the accurate modeling of a wide range of
atomistic systems.  However, despite the high speed of modern computers, MD
simulations still struggle to access the wildly disparate length and time scales
required in many applications.  To partially overcome this difficulty,
multiscale methods that bridge the length-scales from the nano- to the
meso-scales have been proposed. While such methods are
certainly promising, relatively little is known of the effect of
spatial coarse-graining on the quality of the dynamics. Improving our
understanding of these issues is necessary in order to expand upon the range
of problems that can be modeled using such an approach.

We concern ourselves here with spatial multiscale processes for which
the critical atomistic behaviors are localized yet strongly coupled to
the environment through long-range elastic effects. Probably the most well known
numerical method to treat such systems is the quasicontinuum (QC)
method. Specifically, the QC method aims to solve molecular statics (i.e.,
molecular dynamics at zero temperature) problems in such
cases \cite{EBTadmor:1996,BlancLeBrisLegoll2005,acta.atc,bqce12,bqcf.cmame}.
In the QC method, the localized region of interest is treated atomistically in
order to preserve a high degree of accuracy, while the behavior of the
remainder of the system is approximated using continuum
mechanics.  This coupling between the length scales is meant to allow for an
elastic coupling of the two regions, ensuring proper boundary conditions for the
atomistic region.  The number of degrees of freedom necessary to describe the
system is significantly reduced through the use of the Cauchy-Born
approximation and a coarsening of the continuum region via the finite element
method (FEM).  This greatly reduces its computational cost compared to a fully
atomistic solution.

Recently, finite temperature versions of the quasicontinuum method, so-called
hot-QC methods \cite{LMDupuy:2005,EBTadmor:2013}, have been developed
in order to extend the QC approach to finite-temperature molecular dynamics.
Hot-QC was designed to simulate systems held at a constant temperature, which
permits an analysis from a thermodynamic perspective.  Mathematical approaches
to finite temperature equilibrium and dynamics have been given in
\cite{parisfinitetemp10,PPMAK:13,KPS,Blanc201384,0951-7715-23-9-006}.
Hot-QC aims at preserving any thermodynamic quantity that depends only on a
(small) subset of all degrees of freedom.  It has recently been pointed out that
this property implies that transition state theory (TST) rates between
metastable states of the system should be well reproduced insofar as the
system's constituents that are essential to the the transitions are
approximately local to the fully-resolved atomistic region. This property
has been exploited in an extension of these methods --- the hyper-QC
method \cite{hyperqc} --- which seeks to efficiently and accurately simulate
state-to-state dynamics of spatially coarse-grained rare-event systems
through the use of accelerated molecular dynamics \cite{PUSAV2009}.


In this paper, we seek to better understand the error in transition rates
introduced by coarse-graining the periphery of the system.  In order to isolate
this error, we consider the coarse-graining of an atomistic system according to
the coarse-grained molecular dynamics (CGMD) formalism described in
\cite{PhysRevB.72.144104}. However, we note that our choice of coarse variables
differ from that of conventional CGMD, as will be discussed below. CGMD and
hot-QC share the same formal basis, but CGMD provides a closed-form
expression to the coarse Hamiltonian, which enables a mathematical
analysis. Further, it naturally handles the interface between the region to be
treated with atomistic detail and the remainder of the system, in contrast to QC
methods where so-called ghost forces pose additional challenges \cite{acta.atc}.
 In order to obtain closed-form results, we will consider transition rates
computed within the purview of harmonic transition state theory (HTST)
\cite{GHVineyard:1957}. HTST is often the method of choice to approximate
transition rates in hard materials.  Our choice for the dividing surface between
the two metastable regions and how the dividing surface is affected by the
coarse-graining will also be discussed.  The error analysis for the TST rate
will serve as confirmation of the validity of the approach and provide intuition
for the types of error made in the coarsening process for spatial multiscale
methods.

The paper is organized as follows: First, we define a coarse-grained energy in
terms of the atomistic energy to be used in the thermodynamic calculations.
Second, we discuss and analyze the HTST rates in atomistic and
coarse-grained systems and derive the relative error in rates due to
coarse-graining in terms of eigenvalues of the respective
Hamiltonians.  We then discuss how these eigenvalues are affected by
coarse-graining. Following that, we provide numerical
results exhibiting the approximations to the HTST rate made by various
coarse-graining schemes for a 1D system and illustrate the major sources of
error in these computations. We specifically investigate the impact of
the choice of degrees of freedom. Finally, we conclude with general
remarks.

\section{The Coarse-Grained Energy}

Consider a system of $N$ particles in $d$ dimensions held at a fixed temperature
$T$.
Let
$\mathbf{q} \in \R^{dN}$ and $\mathbf{p} \in \R^{dN}$ denote the position and
momentum vectors of the particles respectively.  When necessary, we will denote
the position and momentum vectors of individual particles by $\mathbf{q}_{i}$
and $\mathbf{p}_{i}$ for $1 \leq i \leq N$.  For this paper, we will make use of
mass-weighted coordinates for the position and momentum vectors; that is, we
will consider $\mathbf{\tilde{q}}_{i} = \mathbf{q}_{i}/\sqrt{m_{i}}$, where
$m_{i}$ is the mass of the $i$-th particle so that $\mathbf{\tilde{p}}_{i} =
\mathbf{p}_{i}/\sqrt{m_{i}}$.  However, we will dispense with the tilde notation
and still use $\mathbf{q}$ and $\mathbf{p}$ to denote the mass-weighted
coordinates for position and momentum respectively. The total energy, or
Hamiltonian, of the system will be given by
$\mathcal{H}(\mathbf{q},\mathbf{p})$.  We assume that the Hamiltonian is
separable; that is, we assume that the Hamiltonian may be written as a sum of
the kinetic and potential energies of the system:
\begin{equation*}
 \mathcal{H}(\mathbf{q},\mathbf{p}) = \mathcal{V}(\mathbf{q}) +
\mathcal{K}(\mathbf{p}),
\end{equation*}
where $\mathcal{V}(\mathbf{q})$ denotes the potential energy and
$\mathcal{K}(\mathbf{p})$
denotes the kinetic energy.
%
The total kinetic energy $\mathcal{K}(\mathbf{p})$ is given by
\begin{equation*}
 \mathcal{K}(\mathbf{p}) =
\sum_{i=1}^{N}\frac{1}{2}\|\mathbf{p}_{i}\|^{2},
\end{equation*}
as usual.

In order to coarse-grain the system, we will partition the particles into
representative atoms and constrained atoms. The representative atoms are the
subset of atoms which will be fully resolved in the coarsened system while
the constrained atoms are those atoms which will have their degrees of freedom
removed in the coarse-graining procedure.  We will denote the
partitioning of the position and momentum vectors for the entire system into
representative and constrained components in the following manner:
\begin{equation}\label{Eq:Order}
 \mathbf{q}
 =
 (\mathbf{q}^{r}, \mathbf{q}^{c}),
 \quad
 \mathbf{p} = (\mathbf{p}^{r}, \mathbf{p}^{c}),
\end{equation}
where the superscripts $r$ and $c$ indicate the representative and
constrained components, respectively.  These superscripts will be used
throughout the paper to signal that a given quantity pertains to the
representative atoms or the constrained atoms.  For example, we will let $N^{r}$
and $N^{c}$ denote the number of representative and constrained atoms,
respectively.  Of course, we must then have $N = N^{r} + N^{c}$. Throughout the
paper, we will often simply refer to the representative atoms as repatoms.
Figure \ref{fig:SampleMesh} shows a sample partitioning of particles in a 2D
system.

\begin{figure}[t]
   \centering
   \includegraphics[scale=0.5]{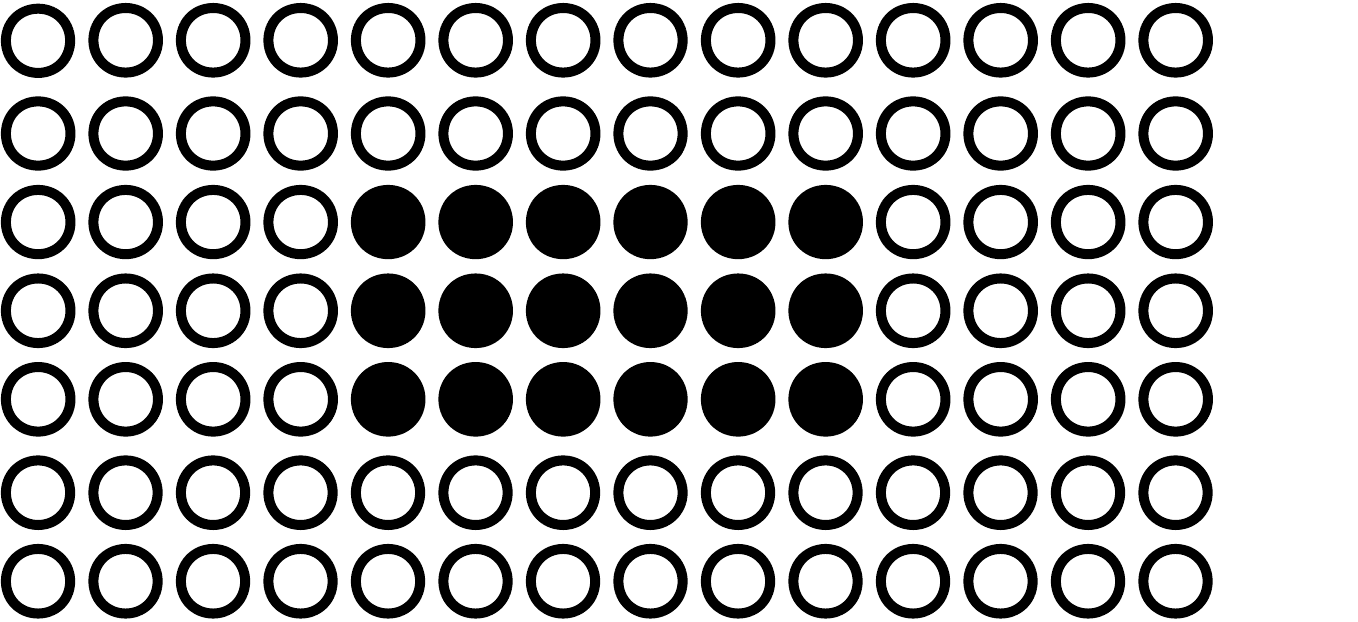}
   \caption{An example of a partitioning of particles in a 2D system.  The
            filled in circles represent the representative atoms or repatoms
            while the empty circles represent the constrained atoms.}
   \label{fig:SampleMesh}
\end{figure}

Note that this is only one of the possible ways to coarsen the variables.
For other choices, the following derivations are still valid as the same block structure of the resolved basis
can be recovered after a simple change of variable.


As we are interested in transitions from one metastable region to another, it
will be useful to consider system properties restricted to a given metastable
region.  Assume that our system initially resides in a metastable region which
we will label as $A$ and that we are interested in transitions
to an adjacent metastable region which we will label $B$.  Let $\Omega_{A}$
denote the set of positions for realizable configurations within the
metastable region $A$ for the system.  In addition, let $\Omega_{A}^{r}$ denote
the set of the positions of the repatoms in these realizable configurations
within $A$; that is, let
\begin{equation}\label{Eq:CoarseDomain}
\Omega_{A}^{r}
:=
\left\{\mathbf{q}^{r} \in \R^{dN^{r}} :
        (\mathbf{q}^{r}, \mathbf{q}^{c}) \in \Omega_{A} \; \text{for some} \;
         \mathbf{q}^{c} \in \R^{dN^{c}}
\right\}.
\end{equation}
We also define $\Omega^{c}_{A}(\mathbf{q}^{r})$ to be the set of constrained
atom positions $\mathbf{q}^{c} \in \R^{dN^{c}}$ such that
$(\mathbf{q}^{r},\mathbf{q}^{c}) \in \Omega_{A}$.

With these newly-defined sets, we define a potential of mean force that we will
take as the potential energy for the coarsened system, as is done in the CGMD, hot-QC,
and hyper-QC methods \cite{PhysRevB.72.144104,LMDupuy:2005,EBTadmor:2013,hyperqc}:
{
\begin{equation*}
 \mathcal{V}^{\cg}(\mathbf{q}^{r}, \beta)
 :=
 -\frac{1}{\beta}\log
 \left(
   \int_{\Omega^{c}_{A}(\mathbf{q}^{r})}
     e^{-\beta\mathcal{V}(\mathbf{q}^{r},\mathbf{q}^{c})}
   d\mathbf{q}^{c}
 \right),
\end{equation*}
where $\beta := (k_{B}T)^{-1}$, $k_{B}$ is Boltzmann's constant, and $T$
is the temperature of the system.  It is important to note that
the coarse-grained energy is dependent upon the temperature of the system.
As mentioned earlier, note that we deviate here from the traditional CGMD method in
the choice of the coarse variables: in the original
formulation, the coarse variables are defined in terms of finite element shape
functions, in contrast to the degrees of freedom of repatoms
\cite{PhysRevB.72.144104}.

This definition of the coarse potential is motivated by the fact that it
preserves thermodynamic properties that are a function of only repatom degrees
of freedom.  Further, this choice also implies the following
equivalence of partition functions for the original, fully-atomistic, and
coarse-grained systems:
\begin{equation}\label{Eq:PartitionFunctionIdentity}
 Z_{\mathcal{V}}
 :=
 \int_{\Omega_{A}}e^{-\beta\mathcal{V}(\mathbf{q})}d\mathbf{q}
 =
 \int_{\Omega^{r}_{A}}e^{-\beta\mathcal{V}^{\cg}(\mathbf{q}^{r},\beta)}d\mathbf{
q}^{r}
 =:
 Z^{\cg}_{\mathcal{V}},
\end{equation}
where $Z_{\mathcal{V}}$ and $Z^{\cg}_{\mathcal{V}}$ are the elements
of the total partition functions pertaining to the potential energy for the
original and coarsened systems, respectively.


We similarly define the coarse-grained kinetic energy to
be an effective kinetic energy, which may be computed analytically:
\begin{equation*}
 \mathcal{K}^{\cg}(\mathbf{p}^{r},\beta)
 =
 -\frac{1}{\beta}\log
   \left(
     \int
       e^{-\beta\mathcal{K}(\mathbf{p}^{r},\mathbf{p}^{c})}
     d\mathbf{p}^{c}
   \right)
 =
 \sum_{i=1}^{N^{r}}
   \frac{1}{2}\|\mathbf{p}_{i}^{r}\|^{2}
 -
 \frac{d}{2\beta}\sum_{i=1}^{N^{c}}\log
   \left(\frac{2\pi}{\beta}\right).
\end{equation*}
%
Again, this choice gives consistent thermodynamics for quantities
involving only repatom degrees of freedom, and it also yields equal
kinetic energy partition functions for the original and coarsened systems.
 With the kinetic energy thus defined, the total energy or Hamiltonian of the
coarse-grained system is defined to be the sum of the two coarse-grained
energies: $\mathcal{H}^{\cg}(\mathbf{q}^{r},\mathbf{p}^{r},\beta) :=
\mathcal{V}^{\cg}(\mathbf{q}^{r},\beta) +
\mathcal{K}^{\cg}(\mathbf{p}^{r},\beta)$. This Hamiltonian can then be used to
carry out molecular dynamics simulations.

\section{Transition State Theory (TST) Rate}

We are interested in estimating the rate at which a system residing in the
metastable region $A$ crosses over into the metastable region
$B$. True transition rates are in general difficult to compute directly. A
common approximation to the true transition rate is given by
transition state theory. In TST, one assumes that that once the system crosses the (hyper-)surface
between states $A$ and $B$ --- the so-called dividing surface --- it
will thermalize in state $B$; i.e., it assumes that the trajectory won't rapidly
cross back to $A$ or leave to another state $C$ before losing its
memory in $B$. This assumption is almost never exactly realized,
but nevertheless, it is often an excellent approximation.
With these assumptions, we may define the TST rate for
the fully atomistic system to be the equilibrium flux across the
dividing surface $\Gamma_{AB}$. In the canonical ensemble (NVT), the
rate becomes:
\begin{equation*}
R^{\text{TST}}_{A \to B}
  =
\frac{\frac{1}{2}
  \int\int_{\Gamma_{AB}}|\mathbf{p}\cdot\mathbf{n}|
    e^{-\beta\mathcal{H}(\mathbf{q},\mathbf{p})}
  dSd\mathbf{p}
     }
     {\int\int_{\Omega_{A}}
       e^{-\beta\mathcal{H}(\mathbf{q},\mathbf{p})}
      d\mathbf{q}d\mathbf{p}
     }
  =
  \frac{1}{2}\sqrt{\frac{2}{\pi\beta}}
  \frac{\int_{\Gamma_{AB}}
          e^{-\beta\mathcal{V}_{s}(\mathbf{q})}
        dS}
       {\int_{\Omega_{A}}
         e^{-\beta\mathcal{V}(\mathbf{q})}
        d\mathbf{q}
       },
\end{equation*}
where $\mathbf{n}$ is the vector normal to the dividing
surface and $dS$ indicates that the integration with respect to position is
taken over the surface $\Gamma_{AB}$.  As we will be treating the potential
energy at the dividing surface separately from the potential energy in state
$A$, for clarity, we denote the potential energy at the dividing surface using
$\mathcal{V}_{s}$. This notation will be used throughout the rest of the paper.
The remaining variables are as they have been defined in previous sections.
Note that in the above equation we are able to integrate the momentum portion of
the integral as our dividing surface is taken to be a hyperplane and the
momentum integral is carried out over $\R^{dN}$ \cite{TSTVoter}. Let us define
the partition function
\begin{equation*}
 Z^{\neq}_{\mathcal{V}}
 :=
 \int_{\Gamma_{AB}}e^{-\beta\mathcal{V}_{s}(\mathbf{q})}dS
 \quad  \text{and recall that} \quad
 Z_{\mathcal{V}}
 =
 \int_{\Omega_{A}}e^{-\beta\mathcal{V}(\mathbf{q})}d\mathbf{q}
\end{equation*}
so that we may write the TST rate in the following way:
\begin{equation}\label{Eq:TSTRate}
 R^{\text{TST}}_{A \to B}
 =
 \frac{1}{2}\sqrt{\frac{2}{\pi\beta}}
 \frac{Z^{\neq}_{\mathcal{V}}}{Z_{\mathcal{V}}}.
\end{equation}

Analogously, we define the TST rate in the coarse-grained system as:
\begin{equation*}
 R^{\text{cg}}_{A \to B}
  =
  \frac{
  \frac{1}{2}
    \int\int_{\Gamma^{\text{cg}}_{AB}}
      |\mathbf{p}\cdot\mathbf{n}|
      e^{-\beta\mathcal{H}^{\text{cg}}(\mathbf{q}^{r},\mathbf{p}^{r},\beta)}
    dS^{r}d\mathbf{p}^{r}
       }
       {
         \int\int_{\Omega^{r}_{A}}
          e^{-\beta\mathcal{H}^{\text{cg}}(\mathbf{q}^{r},\mathbf{p}^{r},\beta)}
         d\mathbf{q}^{r}d\mathbf{p}^{r}
       }
  =
  \frac{1}{2}\sqrt{\frac{2}{\pi\beta}}
  \frac{
    \int_{\Gamma^{\text{cg}}_{AB}}
      e^{-\beta\mathcal{V}^{\text{cg}}_{s}(\mathbf{q}^{r},\beta)}
    dS^{r}
       }
       {
        \int_{\Omega^{r}_{A}}
         e^{-\beta\mathcal{V}^{\text{cg}}(\mathbf{q}^{r},\beta)}
        d\mathbf{q}^{r}
       },
\end{equation*}
where $\mathbf{n}$ is the vector normal to the coarse-grained dividing surface,
and $dS^{r}$ indicates that the integral for the position of the atoms is taken
over the corresponding coarse-grained dividing surface. The superscript $r$
serves as a reminder that this integral involves integrating over only the
repatom positions.  Again, as we will be treating the potential energy at the
dividing surface separately from the potential energy in $\Omega^{r}_{A}$, for
clarity, we denote the potential energy at the dividing surface using
$\mathcal{V}^{\text{cg}}_{s}$. Let us define
\begin{equation*}
 Z^{\cg,\neq}_{\mathcal{V}}
 :=
 \int_{\Gamma^{\text{cg}}_{AB}}
   e^{-\beta\mathcal{V}^{\text{cg}}_{s}(\mathbf{q}^{r},\beta)}
 dS^{r}
 \quad \text{and recall that} \quad
 Z^{\text{cg}}_{\mathcal{V}}
 =
 \int_{\Omega^{r}_{A}}
   e^{-\beta\mathcal{V}^{\text{cg}}(\mathbf{q}^{r},\beta)}
 d\mathbf{q}^{r}
\end{equation*}
so that we may write the coarse-grained TST rate as
\begin{equation}\label{Eq:CGTSTRate}
 R^{\text{cg}}_{A \to B}
 =
 \frac{1}{2}\sqrt{\frac{2}{\pi\beta}}
 \frac{Z^{\text{cg},\neq}_{\mathcal{V}}}{Z^{\text{cg}}_{\mathcal{V}}}.
\end{equation}

While formally simple, the TST approximation to the transition rate
usually does not allow for closed-form results because the partition function
integrals cannot be carried out for general potentials. This
difficulty is compounded by the need to integrate along a potentially
complex dividing surface. These two challenges can be addressed through  the
so-called Harmonic approximation to TST (HTST)
\cite{GHVineyard:1957}. HTST introduces two additional assumptions:
i) the kinetic bottleneck for the transition corresponds to crossing
an energy barrier (culminating at a first order saddle point) that stands between $A$ and $B$, and ii) for the
purpose of the calculation of the partition functions, the potential
can locally be expanded to second order. These properties will be used to give
an explicit definition of a dividing surface and to analytically compute the
partition functions entering into the rate expression using this surface.  The
HTST assumptions are particularly appropriate when states correspond to basins
of attraction of a single minimum on the potential energy surface
(which is often the case for solid-state kinetics) and when the temperature is sufficiently low.

Consider the saddle point $\mathbf{q}_{\text{s}}$ connecting the
states $A$ and $B$.  The potential energy around  $\mathbf{q}_{\text{s}}$ is
then approximated as:
\begin{equation}\label{Eq:Saddle}
 \mathcal{V}_{s}(\mathbf{q})
 :=
 \mathcal{V}(\mathbf{q}_{\text{s}})
 +
 \frac{1}{2}{\mathbf{u}}\cdot{\mathbf{D}}^{\at}{\mathbf{u}},
 \qquad
 {\mathbf{u}}
 :=
 \mathbf{q} - \mathbf{q}_{s},
\end{equation}
where ${\mathbf{u}} := \mathbf{q} -
\mathbf{q}_{s}$ is the vector displacement of all of the atoms from their
positions at the saddle point, and ${\mathbf{D}}^{\at}$ is the Hessian
matrix evaluated at the saddle point.  Explicitly,
\begin{equation*}
 {\mathbf{D}}^{\at}_{ij} :=
 \frac{\partial^{2}\mathcal{V}}
      {\partial \mathbf{q}_{i}\partial\mathbf{q}_{j}}(\mathbf{q}_{\text{s}}),
 \quad\quad 1 \leq i,j \leq N.
\end{equation*}
Note that the above approximation for the potential energy at the saddle point
will be used in conjunction with the other HTST assumption in the computation
of \eqref{Eq:TSTRate}, specifically the computations involving the dividing
surface. A corresponding expansion could be carried out around the potential
energy minimum $\mathbf{q}_{\text{m}}$ in state $A$ in terms of a different
Hessian matrix $\mathbf{D}^{\at}_{\mathrm{m}}$, but this is not necessary with
our formulation of the problem.

At a first-order saddle point, the Hessian has one negative eigenvalue
while the rest are positive (assuming the absence of free translations or
rotations). This offers a natural definition of the dividing surface
as the hyperplane that passes through $\mathbf{q}_{s}$ and whose normal vector
is the unstable eigenmode of the system's dynamical matrix at this point. For
the remainder of the paper, $\Gamma_{AB}$ will be used to denote the dividing
surface defined by these conditions.  This choice conveniently allows for the
explicit calculation of the saddle point partition function as will be shown
below.

We will similarly provide an explicit definition for the dividing surface
$\Gamma^{\cg}_{AB}$ in the coarsened phase space.  This requires that we first
determine the appropriate saddle point in the coarse-grained phase space and its
corresponding dynamical matrix.  As we are projecting the fully atomistic
system into a repatom subspace of this system, we might expect
$\mathbf{q}^{r}_{s}$, the repatom components of the saddle point, to be the
transition state in the coarse-grained space. We are interested, then, in
verifying whether this is actually the case and computing the associated
coarse-grained dynamical matrix.

To begin the derivation of the coarse-grained saddle point,
consider the harmonic approximation of  \eqref{Eq:Saddle}.
By ordering the position variable $\mathbf{q} = (\mathbf{q}^{r},
\mathbf{q}^{c})$ following  \eqref{Eq:Order}, the Hessian matrix
has the block-form structure
\begin{equation}\label{Eq:AtDynamicalMatrix}
\begin{split} \mathbf{D}^{\at} :=
   \begin{pmatrix}
    \mathbf{R} & \mathbf{B} \\
    \mathbf{B}^{\T} & \mathbf{C}
   \end{pmatrix},
\end{split}\end{equation}
where
\begin{equation*}\begin{split}
 \mathbf{R}_{ij} = \frac{\partial^{2}\mathcal{V}_{s}}
 {\partial\mathbf{q}^{r}_{i}\partial \mathbf{q}^{r}_{j}}(\mathbf{q}_{\text{s}}),
 \;\; 1 \leq i,j \leq N^{r};
 \quad
 \mathbf{C}_{k\ell} = \frac{\partial^{2}\mathcal{V}_{s}}
 {\partial\mathbf{q}^{c}_{k}\partial\mathbf{q}^{c}_{\ell}}
 (\mathbf{q}_{\text{s}}), \;\; 1 \leq k,\ell \leq N^{c};\\
%
 \mathbf{B}_{mn} = \frac{\partial^{2}\mathcal{V}_{s}}
 {\partial\mathbf{q}^{r}_{m}\partial \mathbf{q}^{c}_{n}}(\mathbf{q}_{\text{s}}),
 \quad 1 \leq m \leq N^{r}, 1 \leq n \leq N^{c}.
\end{split}\end{equation*}
%
%
Now, we may define a coarse-grained energy near the saddle point with the
domain given by the subspace in  \eqref{Eq:CoarseDomain}:
\begin{equation}\label{Eq:CGFreeEnergy}
 \mathcal{V}^{\cg}_{s}(\mathbf{q}^{r},\beta) :=
 -\frac{1}{\beta}\log\left(
   \int_{\R^{dN^{c}}}
     e^{-\beta\mathcal{V}_{s}(\mathbf{q}^{r},\mathbf{q}^{c})}d\mathbf{q}^{c}
   \right).
\end{equation}
The integration in the above definition is taken over all of $\R^{dN^{c}}$
rather than $\Omega_{A}^{c}(\mathbf{q}^{r})$ to allow for a closed form
expression and is considered to be part of the assumptions for HTST in regards
to treating the potential energy as second-order.  We may compute the integral
directly using  \eqref{Eq:Saddle}:
\begin{equation}\label{Eq:CGDividingSurfaceEnergy}
\begin{split}
 \mathcal{V}^{\cg}_{s}(\mathbf{q}^{r},\beta)
 &=
 \mathcal{V}(\mathbf{q}_{s}) - \frac{1}{\beta}
 \log
 \left(
   \int
     e^{-\frac{\beta}{2}\mathbf{u}\cdot\mathbf{D}^{\at}\mathbf{u}}
   d\mathbf{q}^{c}
 \right)
 \\ &=
 \mathcal{V}(\mathbf{q}_{s}) -\frac{1}{\beta}
 \log
   \left(
     \int
       e^{-\frac{\beta}{2}
         \left(
           (\mathbf{u}^{c} - \mathbf{u}^{c}_{\text{min}})
           \cdot \mathbf{C}
           (\mathbf{u}^{c} - \mathbf{u}^{c}_{\text{min}})
         +
           \mathbf{u}^{r} \cdot \mathbf{D}^{\cg} \mathbf{u}^{r}
         \right)
         }
     d\mathbf{q}^{c}\right)
 \\ &=
 \mathcal{V}(\mathbf{q}_{s})
   + \frac{1}{2\beta}
     \log
       \left(
         \frac{\det\mathbf{C}}{(2\pi/\beta)^{dN^{c}}}
       \right)
    + \frac{1}{2}
      \mathbf{u}^{r} \cdot \mathbf{D}^{\cg} \mathbf{u}^{r},
\end{split}
\end{equation}
where
\begin{equation}\label{Eq:RelaxedConstrainedAtoms}
  \mathbf{u}^{c}_{\text{min}} := -\mathbf{C}^{-1}\mathbf{B}^{\T}\mathbf{u}^{r},
  \quad
  \mathbf{D}^{\cg} := \mathbf{R} - \mathbf{B}\mathbf{C}^{-1}\mathbf{B}^{\T},
\end{equation}
and we have assumed that the matrix $\mathbf{C}$ is invertible and
positive-definite.  From this, we can see that $\mathbf{q}^{r}_{s}$
(equivalently, $\mathbf{u}^{r} = \mathbf{0}$) is a saddle point of the
coarse-grained system with its corresponding dynamical matrix being
\begin{equation}\label{Eq:CGDynMatrix}
 \mathbf{D}^{\cg} = \mathbf{R} - \mathbf{B}\mathbf{C}^{-1}\mathbf{B}^{\T}
\end{equation}
provided that $\mathbf{D}^{\cg}$ has both positive and negative eigenvalues.
In order for the dividing surface $\Gamma^{\cg}_{AB}$ to be well-defined, recall
that the matrix $\mathbf{D}^{\cg}$ must have only one negative eigenvalue and
that the rest of its eigenvalues must be positive.  We
will elaborate on these requirements for $\mathbf{C}$ and
$\mathbf{D}^{\cg}$ and under what circumstances they can be guaranteed
to be met shortly. Before that, observe that the relation
\begin{equation}\label{Eq:QuadraticPortionofAtomisticEnergy}
 \mathbf{u}\cdot\mathbf{D}^{\at}\mathbf{u} =
 (\mathbf{u}^{c}- \mathbf{u}^{c}_{\text{min}})\cdot
 \mathbf{C}(\mathbf{u}^{c} - \mathbf{u}^{c}_{\text{min}})
 +
 \mathbf{u}^{r}\cdot\mathbf{D}^{\cg}\mathbf{u}^{r}
\end{equation}
arises from multiplying the displacement vector and dynamical matrix in their
partitioned forms from  \eqref{Eq:AtDynamicalMatrix} and then completing
the square for the constrained components.  Writing this portion of the
atomistic energy in this form makes it clear that for a given $\mathbf{u}^{r}$,
$\mathbf{u}^{c}_{\text{min}}$ gives the energy-minimizing displacements for the
constrained atoms, thus motivating the choice of notation for this vector.
This quantity will play an important role in the discussion on the error in the
coarse-grained approximation of the TST rate.
An equivalent
derivation can be carried out around the energy minimum. However, as
this quantity will not be used in the following, the derivation is omitted.

Note that, in order to facilitate the formal analysis, the method we consider here does not exactly correspond
to either the CGMD or hot-QC methods. As mentioned earlier, degrees of freedom
in CGMD are usually defined in terms of finite element shape functions and not
in terms of repatoms.  Further, the coarse-grained Hamiltonian is computed only once from
 \eqref{Eq:CGFreeEnergy} using a harmonic approximation around a specific
value of $\mathbf{q}^{r}$ (usually corresponding to the energy
minimum). In the case of hot-QC, additional approximations intervene
in the calculation of the coarse-grained Hamiltonian, namely, the
harmonic approximation is replaced by a local-harmonic approximation
and  the integral over the constrained atoms
is further approximated using the finite element method and a Cauchy-Born
approximation based on a set of nodes (which are distinct from repatoms) placed
in the periphery. In the ``static'' variant of hot-QC, the displacement of the
nodes is chosen to minimize an approximation of the (free-)energy of the
constrained atoms with respect to the instantaneous $\mathbf{q}^{r}$ while in
the ``dynamic'' variant, the nodes are allowed to move dynamically in
order to reduce the computational cost inherent to the minimization.

Our discussion pertains to a hypothetical
method that
combines the best of CGMD and hot-QC, i.e., where coarse-graining is carried
out {\em exactly} at the harmonic level with respect to the instantaneous
$\mathbf{q}^{r}$. At sufficiently low temperature, the error of such a method is therefore
dominated by the coarse-graining error and provides a
lower bound on the error in an actual CGMD or hot-QC model.
A complete analysis of the rate errors in hot-QC would have to
consider all of the additional approximations, which is beyond the
scope of the current paper. However, in such an analysis, the error
contributed solely by the coarse-graining process would exactly correspond to what will be
derived below.

Returning to the properties of the matrices $\mathbf{C}$ and
$\mathbf{D}^{\cg}$, we first note that whether $\mathbf{C}$ is invertible and
$\mathbf{D}^{\cg}$ has a negative eigenvalue is entirely dependent upon a
sensible choice of the repatom region for the problem.  Provided that the
essential transition behavior is contained within the chosen repatom region, we
expect that the matrices $\mathbf{C}$ and $\mathbf{D}^{\cg}$ will satisfy these
conditions.  Assuming this to be the case, it can be shown that $\mathbf{C}$ is
also positive definite and that the eigenvalues of $\mathbf{D}^{\cg}$ are such
that $\Gamma^{\cg}_{AB}$ is well defined.  We now state
without proof a version of Cauchy's Interlacing Theorem to be used in the major
theorem of this section proving the preceding statements.
\begin{theorem}
[Cauchy's Interlacing Theorem]
Let $\mathbf{S}$ be a symmetric $n \times n$
matrix.  Define the orthogonal projection matrix $\mathbf{P}$ in block form to be
$\mathbf{P} :=
 \begin{pmatrix}
  \mathbf{I}_{m} & \mathbf{0} \\
  \mathbf{0} & \mathbf{0}
 \end{pmatrix},
$
where $\mathbf{I}_{m}$ is an $m \times m$ identity matrix with $m < n$ and the
remainder of the blocks are zero matrices of the appropriate dimensions.  Let
$\mathbf{T}$ denote the upper left $m \times m$ matrix block of
$\mathbf{P}^{\T}\mathbf{S}\mathbf{P}$:
\begin{equation*}
 \mathbf{P}^{\T}\mathbf{S}\mathbf{P} =
   \begin{pmatrix}
    \mathbf{T} & \mathbf{0} \\
    \mathbf{0} & \mathbf{0}
   \end{pmatrix},
\end{equation*}
where the remainder of the blocks are zero matrices of the appropriate
dimensions. If the eigenvalues of $\mathbf{S}$
are $\sigma_{1} \leq \sigma_{2} \leq \cdots \leq \sigma_{n}$ and the eigenvalues
of $\mathbf{T}$ are $\tau_{1} \leq \tau_{2} \leq \cdots \leq \tau_{m}$,
then
\begin{equation}\label{Eq:InterlacingEigenvalues}
 \sigma_{j} \leq \tau_{j} \leq \sigma_{n-m+j}, \;\; \text{for} \;\; 1 \leq j
\leq m.
\end{equation}
\end{theorem}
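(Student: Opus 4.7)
The plan is to deduce both interlacing inequalities from the Courant--Fischer variational characterization of the eigenvalues of a symmetric matrix. Recall that, with the ordering convention of the theorem, one has the two equivalent formulas
\begin{equation*}
\sigma_{k} \;=\; \min_{\dim V = k}\ \max_{0 \neq x \in V}\ \frac{x^{\T}\mathbf{S}x}{x^{\T}x} \;=\; \max_{\dim W = n-k+1}\ \min_{0 \neq x \in W}\ \frac{x^{\T}\mathbf{S}x}{x^{\T}x},
\end{equation*}
with analogous formulas for $\mathbf{T}$ on $\R^{m}$. I would begin by stating this principle (or quoting it from any standard linear-algebra reference) and flagging which form I intend to use for each half of the interlacing.

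The key structural observation is that the range of $\mathbf{P}$ supplies a natural isometric embedding $\iota : \R^{m} \hookrightarrow \R^{n}$, $y \mapsto (y,0)$, and that under this embedding the Rayleigh quotient is preserved: the block form of $\mathbf{P}^{\T}\mathbf{S}\mathbf{P}$ immediately gives $\iota(y)^{\T}\mathbf{S}\,\iota(y) = y^{\T}\mathbf{T}y$ and $\iota(y)^{\T}\iota(y) = y^{\T}y$. This is the only extra ingredient needed beyond the variational principle.

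For the lower bound $\sigma_{j} \leq \tau_{j}$, I would pick a $j$-dimensional subspace $V_{0} \subseteq \R^{m}$ realizing the min-max value $\tau_{j}$ and push it forward via $\iota$ to a $j$-dimensional subspace $\iota(V_{0}) \subseteq \R^{n}$ on which the maximum of the Rayleigh quotient for $\mathbf{S}$ remains exactly $\tau_{j}$; the min-max characterization of $\sigma_{j}$ then forces $\sigma_{j} \leq \tau_{j}$. For the upper bound $\tau_{j} \leq \sigma_{n-m+j}$, I would instead use the dual max-min form: an $(m-j+1)$-dimensional subspace $W_{0} \subseteq \R^{m}$ realizing the max-min value $\tau_{j}$ embeds to a subspace of dimension $m - j + 1 = n - (n-m+j) + 1$ in $\R^{n}$, whence the max-min characterization of $\sigma_{n-m+j}$ yields $\sigma_{n-m+j} \geq \tau_{j}$.

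The main obstacle, modest as it is, is the bookkeeping between the two Courant--Fischer forms and verifying the dimension counts --- in particular that $m - j + 1$ is the precise dimension needed to access $\sigma_{n-m+j}$ via the max-min formula. Existence of the optimal subspaces $V_{0}$ and $W_{0}$ is standard (they may be taken as spans of eigenvectors of $\mathbf{T}$), and beyond the symmetry of $\mathbf{S}$ and the block structure of $\mathbf{P}$, no further input is required.
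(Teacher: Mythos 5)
Your proposal is correct: the isometric embedding $y \mapsto (y,0)$ preserves the Rayleigh quotient, the dimension count $n-(n-m+j)+1 = m-j+1$ checks out, and the two Courant--Fischer forms deliver the two halves of \eqref{Eq:InterlacingEigenvalues} exactly as you describe. The paper does not write out a proof at all --- it cites Kato and remarks only that the result ``may be proved using Courant's Min-Max Theorem'' --- so your argument is precisely the one the paper gestures at, carried out in full.
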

\begin{proof}
See \cite{KatoPerturbation}.  This may be proved using Courant's Min-Max
Theorem.  
\end{proof}
Now, we prove our claim.
\begin{theorem}\label{Thm:WellBehaved}
Let $\mathbf{D}^{\at}$ and $\mathbf{D}^{\cg}$
be the fully atomistic dynamical matrix and the coarse-grained dynamical
matrix as defined in  \eqref{Eq:AtDynamicalMatrix}
and \eqref{Eq:CGDynMatrix}, respectively.  Recall that we assume that
$\mathbf{D}^{\at}$ has only one negative eigenvalue while the rest are
positive.  For ease of reference,
\begin{equation}\label{Eq:WellBehavedThmRefEq}
 \mathbf{D}^{\at} =
   \begin{pmatrix}
    \mathbf{R} & \mathbf{B} \\
    \mathbf{B}^{\T} & \mathbf{C}
   \end{pmatrix}
 \;\; \text{and} \;\;
 \mathbf{D}^{\cg} = \mathbf{R} - \mathbf{B}\mathbf{C}^{-1}\mathbf{B}^{\T}.
\end{equation}
We also require that the matrix $\mathbf{C}$ be non-singular so that
the above definitions make sense.  If $\mathbf{D}^{\cg}$ has a negative
eigenvalue, then the remaining eigenvalues of $\mathbf{D}^{\cg}$ are
positive and the matrix $\mathbf{C}$ is positive definite.  In addition, the
negative eigenvalue of $\mathbf{D}^{\cg}$ is greater than or equal to in
absolute value that of $\mathbf{D}^{\at}$.  In symbols,
\begin{equation*}
  |\lambda^{\cg}| \geq |\lambda^{\at}|.
\end{equation*}
\end{theorem}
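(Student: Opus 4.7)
The plan is to exploit the block factorization already implicit in \eqref{Eq:QuadraticPortionofAtomisticEnergy}. Introducing the unit lower-triangular matrix
\begin{equation*}
  \mathbf{L} := \begin{pmatrix} \mathbf{I} & \mathbf{0} \\ \mathbf{C}^{-1}\mathbf{B}^{\T} & \mathbf{I} \end{pmatrix},
\end{equation*}
the completion-of-squares identity \eqref{Eq:QuadraticPortionofAtomisticEnergy} can be rewritten as the congruence
\begin{equation*}
  \mathbf{D}^{\at} = \mathbf{L}^{\T} \begin{pmatrix} \mathbf{D}^{\cg} & \mathbf{0} \\ \mathbf{0} & \mathbf{C} \end{pmatrix} \mathbf{L}.
\end{equation*}
Since $\mathbf{L}$ is invertible, Sylvester's law of inertia guarantees that $\mathbf{D}^{\at}$ and the block-diagonal matrix share the same numbers of positive, negative, and zero eigenvalues.

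The first two claims then follow by inertia bookkeeping. By hypothesis $\mathbf{D}^{\at}$ has exactly one negative eigenvalue and no zeros; because the inertia of a block-diagonal matrix is the sum of the inertias of its blocks, the assumption that $\mathbf{D}^{\cg}$ contributes at least one negative eigenvalue forces $\mathbf{D}^{\cg}$ to have exactly one and $\mathbf{C}$ to have none. A non-singular symmetric matrix without negative eigenvalues is positive definite, so $\mathbf{C}$ is positive definite. The identity $\det \mathbf{D}^{\at} = \det \mathbf{C} \cdot \det \mathbf{D}^{\cg}$ then forces $\det \mathbf{D}^{\cg} \neq 0$, so the remaining $dN^{r}-1$ eigenvalues of $\mathbf{D}^{\cg}$ are strictly positive.

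For the magnitude inequality I would apply Cauchy's Interlacing Theorem to the symmetric matrix $\mathbf{S} := (\mathbf{D}^{\at})^{-1}$. The standard block-inversion formula, valid because $\mathbf{C}$ and $\mathbf{D}^{\cg}$ have just been shown to be invertible, gives that the upper-left $dN^{r} \times dN^{r}$ block of $(\mathbf{D}^{\at})^{-1}$ is exactly $(\mathbf{D}^{\cg})^{-1}$. Since $\lambda^{\at} < 0$ is the unique negative eigenvalue of $\mathbf{D}^{\at}$, its reciprocal $1/\lambda^{\at}$ is the unique (hence smallest) negative eigenvalue of $(\mathbf{D}^{\at})^{-1}$; the same reasoning identifies $1/\lambda^{\cg}$ as the smallest eigenvalue of $(\mathbf{D}^{\cg})^{-1}$. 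Plugging into \eqref{Eq:InterlacingEigenvalues} with $j = 1$ yields $1/\lambda^{\at} \leq 1/\lambda^{\cg}$, and since both reciprocals are negative this rearranges to $|\lambda^{\cg}| \geq |\lambda^{\at}|$.

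The congruence, Sylvester's law, and the block-inversion identity are all textbook ingredients; the genuine subtlety lies in the sign-bookkeeping at the final Cauchy step, since inversion reverses the ordering of positive eigenvalues while leaving the single negative eigenvalue isolated, so one must verify that the two relevant reciprocals actually occupy the smallest slot in each sorted spectrum. That is the step I expect to require the most care.
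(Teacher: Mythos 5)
Your proof is correct. For the final inequality $|\lambda^{\cg}| \geq |\lambda^{\at}|$ you follow essentially the same route as the paper: block-invert $\mathbf{D}^{\at}$, observe that its upper-left block is $(\mathbf{D}^{\cg})^{-1}$, and apply Cauchy's Interlacing Theorem with $j=1$ to the reciprocal eigenvalues, with the same sign bookkeeping at the end. Where you genuinely diverge is in establishing the inertia claims. The paper proves that $\mathbf{C}$ is positive definite and that the remaining eigenvalues of $\mathbf{D}^{\cg}$ are positive by combining the determinant identity $\det\mathbf{D}^{\at} = \det\mathbf{C}\,\det\mathbf{D}^{\cg}$ with two separate applications of interlacing: once to $(\mathbf{D}^{\at})^{-1}$ and $(\mathbf{D}^{\cg})^{-1}$ to control the signs of the eigenvalues of $\mathbf{D}^{\cg}$, and once to $\mathbf{D}^{\at}$ and $\mathbf{C}$ (together with the sign of $\det\mathbf{C}$) to rule out a negative eigenvalue of $\mathbf{C}$. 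You instead read both claims off the congruence $\mathbf{D}^{\at} = \mathbf{L}^{\T}\,\mathrm{diag}(\mathbf{D}^{\cg},\mathbf{C})\,\mathbf{L}$ and Sylvester's law of inertia: since the inertia of the block-diagonal matrix is additive over its blocks and must match that of $\mathbf{D}^{\at}$, namely one negative, no zero, and the rest positive eigenvalues, the single negative eigenvalue is consumed by $\mathbf{D}^{\cg}$ (given the hypothesis that it has at least one), leaving $\mathbf{C}$ positive definite and $\mathbf{D}^{\cg}$ with no zero and no further negative eigenvalues. This is the Haynsworth inertia-additivity argument; it is cleaner, handles both claims in one stroke, and makes transparent why the hypothesis that $\mathbf{D}^{\cg}$ has a negative eigenvalue is exactly what is needed. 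The paper's version uses only the interlacing lemma it has already stated, at the cost of a slightly more roundabout case analysis. One cosmetic remark: your appeal to $\det\mathbf{D}^{\at} = \det\mathbf{C}\,\det\mathbf{D}^{\cg}$ to conclude $\det\mathbf{D}^{\cg}\neq 0$ is redundant, since Sylvester's law already tells you the block-diagonal matrix, and hence $\mathbf{D}^{\cg}$, has no zero eigenvalue.
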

\begin{proof}
As the matrix $\mathbf{D}^{\at}$ possesses no zero eigenvalue, it
is invertible. Since the matrix $\mathbf{C}$ is also invertible, we
may use a standard block-matrix determinant identity and
\eqref{Eq:WellBehavedThmRefEq} to show the following:
\begin{equation}\label{Eq:BlockDeterminantIdentity}
 \det\mathbf{D}^{\at} = \det\mathbf{C}\det\mathbf {D}^{\cg}.
\end{equation}
Thus, the determinant of $\mathbf{D}^{\cg}$ is non-zero, so $\mathbf{D}^{\cg}$
is non-singular.  We may then compute the inverse of $\mathbf{D}^{\at}$ in block
form which is
\begin{equation*}
 (\mathbf{D}^{\at})^{-1} =
 \begin{pmatrix}
  (\mathbf{D}^{\cg})^{-1} & -(\mathbf{D}^{\cg})^{-1}\mathbf{B}\mathbf{C}^{-1} \\
  -\mathbf{C}^{-1}\mathbf{B}^{\T}(\mathbf{D}^{\cg})^{-1} & \mathbf{C}^{-1} +
\mathbf{C}^{-1}\mathbf{B}^{\T}(\mathbf{D}^{\cg})^{-1}\mathbf{B}\mathbf{C}^{-1}
 \end{pmatrix}.
\end{equation*}
Note that, as the eigenvalues of $(\mathbf{D}^{\at})^{-1}$ are the
multiplicative inverses of the eigenvalues of $\mathbf{D}^{\at},$
$(\mathbf{D}^{\at})^{-1}$ has one negative eigenvalue with the rest being
positive.  We now apply Cauchy's Interlacing Theorem to the matrices
$(\mathbf{D}^{\at})^{-1}$ and $(\mathbf{D}^{\cg})^{-1}$ to place bounds on the
eigenvalues of $(\mathbf{D}^{\cg})^{-1}$.  Let $\lambda^{\at}$ denote the
single negative eigenvalue of $\mathbf{D}^{\at}$.  It is important to note that
$(\lambda^{\at})^{-1}$
is less than all of the other eigenvalues of $(\mathbf{D}^{\at})^{-1}$ due to
its sign.  Since we are given that $\mathbf{D}^{\cg}$ possesses
a negative eigenvalue and thus that $(\mathbf{D}^{\cg})^{-1}$ possesses
a negative eigenvalue, Cauchy's Interlacing Theorem immediately implies that
the remaining eigenvalues of $(\mathbf{D}^{\cg})^{-1}$ are positive by
\eqref{Eq:InterlacingEigenvalues}.  Let $(\lambda^{\cg})^{-1}$ denote the
single negative eigenvalue of $(\mathbf{D}^{\cg})^{-1}$.  Cauchy's Interlacing
Theorem implies that $(\lambda^{\at})^{-1} \leq (\lambda^{\cg})^{-1}$.  Keeping
in mind that these eigenvalues are negative, this inequality implies the
result
\begin{equation}\label{Eq:CurvatureComparison}
 |\lambda^{\cg}| \geq |\lambda^{\at}|.
\end{equation}
Inverting the eigenvalues of $(\mathbf{D}^{\cg})^{-1}$ to arrive at the
eigenvalues of $\mathbf{D}^{\cg}$ does not change their sign, so we have
finished the proof that the spectrum of $\mathbf{D}^{\cg}$ has the properties
as claimed in the statement of the theorem.

In order to finish the proof, observe that
\eqref{Eq:BlockDeterminantIdentity} implies that the determinant
of $\mathbf{C}$ is positive as the determinant of $\mathbf{D}^{\at}$ and
$\mathbf{D}^{\cg}$ are both negative. Cauchy's Interlacing Theorem applied to
$\mathbf{D}^{\at}$ and $\mathbf{C}$ implies that $\mathbf{C}$ may have at most
one negative eigenvalue according to  \eqref{Eq:InterlacingEigenvalues}.  As
having just one negative eigenvalue would force the determinant of $\mathbf{C}$
to be negative and contradict our determinant identity, we must have that all of
the eigenvalues of $\mathbf{C}$ are in fact positive. 
\end{proof}

This theorem proves that $\mathbf{q}^{r}_{s}$ is indeed a saddle point, assuming the repatom region to be appropriately selected.
In particular, it shows that no additional transition pathways are introduced in
the coarsened system and that if the coarsened system has a transition
pathway, it uniquely corresponds to a transition pathway in the fully
atomistic system. Most interestingly, this result implies
that coarse-graining the system {\em never} decreases the absolute curvature of
the actual transition pathway at the barrier in the potential energy surface due
to the inequality in the negative eigenvalues of the dynamical
matrices.  Equivalently, this implies that the magnitude of the imaginary
eigenmode frequency for the coarsened system is never less than the magnitude of
the imaginary eigenmode frequency in the fully atomistic system.  This will have
implications for the TST rate in the coarsened system that will be made
clear over the course of the analysis of the TST rate approximation in the next
section.

Finally, we may now provide our explicit definition of the dividing surface
$\Gamma^{\cg}_{AB}$ in the coarse-grained system assuming that the repatom
region was appropriately chosen.  We define this surface to be the hyperplane
that passes through the saddle point $\mathbf{q}^{r}_{s}$ and has as its normal
vector the unstable eigenmode of $\mathbf{D}^{\cg}$.  The dynamical matrix
$\mathbf{D}^{\cg}$ takes the form of a repatom Hessian with a correction due to
the interaction between the representative and constrained atoms.  This
correction can easily be understood from a physical point of view after
considering the implications of  \eqref{Eq:RelaxedConstrainedAtoms} and
\eqref{Eq:QuadraticPortionofAtomisticEnergy}.  Given a displacement of the
repatoms $\mathbf{u}^{r}$, the matrix $-\mathbf{C}^{-1}\mathbf{B}^{\T}$ in the
definition of $\mathbf{D}^{\cg}$ applied to $\mathbf{u}^{r}$ will give the
displacement of the constrained atoms that will yield a minimized energy for
the fully atomistic system in the context of HTST.  In other words,
$-\mathbf{C}^{-1}\mathbf{B}^{\T}$ finds the relaxed constrained atom
configuration for the problem.  The application of the $\mathbf{B}$ matrix is
necessary for determining the force such a configuration would then exert on the
repatoms.  Thus, the correction to the repatom Hessian is due to the constrained
atoms in their relaxed state. 

\section{TST Rate Error Analysis}

With this dividing surface now defined, we
may analyze the error in the TST rate made by coarse-graining the system.
For this, an absolute error analysis is not useful as most classical
transition rates vanish in the zero temperature
limit.  Therefore, convergence is essentially already guaranteed by
the exponentiation of the energy in the Gibbs measure.  In order to conduct a more
meaningful analysis, the relative error in the TST rate will be examined
instead. Using the definitions from \eqref{Eq:TSTRate} and
\eqref{Eq:CGTSTRate}, the relative error is seen to be
\begin{equation}\label{Eq:RelativeError}
 \left|\frac{R^{TST}_{A \to B}  - R^{\text{cg}}_{A \to B}}
            {R^{TST}_{A \to B}}\right|
 =
 \left|1 -
 \frac{Z_{\mathcal{V}}}{Z^{\text{cg}}_{\mathcal{V}}}
 \frac{Z^{\text{cg},\neq}_{\mathcal{V}}}{Z^{\neq}_{\mathcal{V}}}
 \right|.
\end{equation}
This relative error can be computed by calculating the two ratios
$Z_{\mathcal{V}}/Z^{\text{cg}}_{\mathcal{V}}$ and
$Z^{\text{cg},\neq}_{\mathcal{V}}/Z^{\neq}_{\mathcal{V}}$ separately.  The
first ratio $Z_{\mathcal{V}}/Z^{\text{cg}}_{\mathcal{V}}$ is trivial: as was shown in  \eqref{Eq:PartitionFunctionIdentity},
this ratio is simply $Z_{\mathcal{V}}/Z^{\text{cg}}_{\mathcal{V}} =
1$, by construction.

Turning our attention to the second ratio, let us compute the dividing surface
partition function for the coarsened system
first.  By definition,
\begin{equation*}
 Z^{\cg,\neq}_{\mathcal{V}}
 =
  \int_{\Gamma_{AB}^{\cg}}
    e^{-\beta\mathcal{V}^{\cg}_{s}(\mathbf{q}^{r},\beta)}
  dS^{r} .
\end{equation*}
Using the result for $\mathcal{V}^{\cg}_{s}$ from
\eqref{Eq:CGDividingSurfaceEnergy}, we have that
\begin{equation}\label{Eq:IntermediateCGDivZ}
 Z^{\cg,\neq}_{\mathcal{V}}
 =
 e^{-\beta\mathcal{V}(\mathbf{q}_{s})}
 \sqrt{\frac{(2\pi/\beta)^{dN^{c}}}{\text{det}\,\mathbf{C}}}
 \int_{\Gamma_{AB}^{\cg}}
   e^{-\frac{\beta}{2} \mathbf{u^{r}} \cdot \mathbf{D}^{\cg}\mathbf{u}^{r}}
 d\mathbf{u}^{r}.
\end{equation}
Now, $\mathbf{D}^{\cg}$ is a real, symmetric matrix.  
Let $\lambda^{\cg}$ denote the single
negative eigenvalue of $\mathbf{D}^{\cg}$, and let $\mathbf{v}^{\cg}$ denote its
corresponding normalized eigenvector.  Let $\lambda^{\cg}_{i}$ for $2 \leq i
\leq dN^{r}$ denote the remaining positive eigenvalues of the matrix with
$\mathbf{v}^{\cg}_{i}$ being their associated normalized eigenvectors which we
may choose so that they are orthonormal with respect to one another.  Now, for
any $\mathbf{q}^{r} \in \Gamma^{\cg}_{AB}$, the displacement $\mathbf{u}^{r} =
\mathbf{q}^{r} - \mathbf{q}^{r}_{s} \in \R^{dN^{r}}$ must be orthogonal to
$\mathbf{v}^{\cg}$ as the normal vector to the dividing surface is parallel to
this unstable eigenmode.  Thus, the displacement $\mathbf{u}^{r}$ may be written
as $\mathbf{u}^{r} = \sum_{i=2}^{dN^{r}}\alpha_{i}\mathbf{v}^{\cg}_{i}$ for some
real constants $\alpha_{i}$. Hence,
\begin{equation*}
 \mathbf{u}^{r} \cdot \mathbf{D}^{\cg} \mathbf{u}^{r}
 =
  \left(\sum_{i=2}^{dN^{r}}\alpha_{i}\mathbf{v}^{\cg}_{i}\right)\cdot
  \left(\sum_{i=2}^{dN^{r}}\alpha_{i}\mathbf{D}^{\cg}\mathbf{v}^{\cg}_{i}\right)
 =
 \sum_{i=2}^{dN^{r}}\alpha^{2}_{i}\lambda^{\cg}_{i}.
\end{equation*}
All of the eigenvalues in the sum are positive.  Therefore,
\begin{equation*}
\begin{split}
 \int_{\Gamma^{\cg}_{AB}}
   e^{-\frac{\beta}{2}\mathbf{u}^{r}\cdot\mathbf{D}^{\cg}\mathbf{u}^{r}}
 d\mathbf{u}^{r}
 &=
 \int_{\R^{dN^{r} - 1}}
   e^{-\frac{\beta}{2}\sum_{i=2}^{dN^{r}}\lambda_{i}\alpha_{i}^{2}}
 d\alpha_{2} \cdots d\alpha_{dN^{r}}
 \\ &=
 \left(\frac{2\pi}{\beta}\right)^{\frac{dN^{r} - 1}{2}}
 \frac{1}{\sqrt{\Pi_{i=2}^{dN^{r}}\lambda^{\cg}_{i}}}
 =
 \left(\frac{2\pi}{\beta}\right)^{\frac{dN^{r} - 1}{2}}
 \sqrt{\frac{|\lambda^{\cg}|}{|\text{det}\,\mathbf{D}^{\cg}|}}.
\end{split}
\end{equation*}
Substituting this result into  \eqref{Eq:IntermediateCGDivZ}, we see that
\begin{equation*}
 Z^{\cg,\neq}_{\mathcal{V}}
 =
 e^{-\beta\mathcal{V}(\mathbf{q}_{s})}
 \left(\frac{2\pi}{\beta}\right)^{\frac{dN - 1}{2}}
 \sqrt{\frac{|\lambda^{\cg}|}
      {\text{det}\,\mathbf{C}\,|\text{det}\,\mathbf{D}^{\cg}|}}.
\end{equation*}
A similar computation for $Z^{\neq}_{\mathcal{V}}$ yields
\begin{equation*}
 Z^{\neq}_{\mathcal{V}}
 =
 e^{-\beta\mathcal{V}(\mathbf{q}_{s})}
   \left(\frac{2\pi}{\beta}\right)^{\frac{dN-1}{2}}
   \sqrt{\frac{|\lambda^{\at}|}
              {|\text{det}\,\mathbf{D}^{\at}|}},
\end{equation*}
where $\lambda^{\at}$ is the single negative eigenvalue of
$\mathbf{D}^{\at}$.  With this result and the block-matrix identity from
\eqref{Eq:BlockDeterminantIdentity}, the desired ratio is
\begin{equation*}
 \frac{Z^{\cg,\neq}_{\mathcal{V}}}{Z^{\neq}_{\mathcal{V}}}
 = \sqrt{\frac{\lambda^{\cg}}{\lambda^{\at}}}.
\end{equation*}

Since $Z_{\mathcal{V}}/Z^{\text{cg}}_{\mathcal{V}} = 1$ and
since we proved in Theorem~\ref{Thm:WellBehaved} that
$\lambda^{\cg} /\lambda^{\at}\ge 1,$ the relative error for the TST rate approximation made by the coarsened system
can be shown to satisfy
\begin{equation}\label{Eq:RelativeErrorResult}
\frac{R^{\text{cg}}_{A \to B}-R^{TST}_{A \to B}}
       {R^{TST}_{A \to B}}
 = \frac{Z^{\text{cg},\neq}_{\mathcal{V}}}
                  {Z^{\neq}_{\mathcal{V}}}
             -1
 = \sqrt{\frac{\lambda^{\cg}}{\lambda^{\at}}} - 1\ge 0.
\end{equation}
Thus, the relative error in the TST rate computation is entirely dependent
upon the imaginary eigenfrequencies of the two dynamical matrices.  In addition,
we have that
\begin{equation*}
  R^{\cg}_{A \to B} \geq R^{TST}_{A \to B}.
\end{equation*}
To better understand this error, we will further investigate the
eigenvalue $\lambda^{\cg}$.
\begin{remark}
The relative error in the TST rate found above has no dependence
on temperature.  This is a consequence of the harmonic approximation of the
potential energy used in the beginning of the analysis.  If higher order terms
in the potential energy approximation are included, a temperature dependence in
the relative error will result.  This dependence on the thermodynamic
temperature $\beta$ will be $\mathcal{O}(\beta^{-2})$ so that this additional
error term goes to zero in the zero temperature limit.
\end{remark}

\section{Coarse-Grained Eigenvalue Analysis}

In the previous section, it was shown that the relative error in the TST rate
is entirely dependent upon the negative eigenvalues of the dynamical matrices
for the fully atomistic and coarse-grained systems at their respective
transition states.  To better understand this error, it is important to
understand how the negative eigenvalue for the fully atomistic system is
affected by the coarsening process.  This analysis will provide greater insight
into how the coarsened system relates to the original system as well as for
which situations the coarse-grained approximation of the TST rate will be most
accurate.  This insight will be useful in that it will be suggestive of
optimal approaches to coarse-graining a given problem.

In this section, we will again let $\mathbf{D}^{\at}$ and
$\mathbf{D}^{\cg}$ represent the dynamical matrices as defined previously for
the fully atomistic and coarse-grained systems.  We will let $\lambda^{\at}$
denote the single negative eigenvalue of $\mathbf{D}^{\at}$ while
$\mathbf{u}^{\at}$ will denote a normalized eigenvector corresponding to
$\lambda^{\at}$.  As before, we will also let $\lambda^{\cg}$ denote the sole
negative eigenvalue of $\mathbf{D}^{\cg}$, and we will let $\mathbf{v}^{\cg}$
denote a normalized eigenvector associated with this eigenvalue.  The sign of
$\mathbf{v}^{\cg}$ will be chosen so that $\mathbf{u}^{\at,r} \cdot
\mathbf{v}^{\cg} \geq 0$.  Here, $\mathbf{u}^{\at,r} \in \R^{dN^{r}}$ denotes a
vector consisting of only the repatom elements from the fully atomistic unstable
eigenmode.  Note that this element does not have the same dimension as
$\mathbf{u}^{\at}$.  Such a convention will be used throughout the remainder of
the paper when we wish to consider only the repatom or constrained portion of
a given variable.  To be clear, when using $c$ as a superscript, it
implies that the vector under consideration is an element of $\R^{dN^{c}}$.

To begin the analysis, let us determine the conditions necessary
for $\lambda^{\at} = \lambda^{\cg}$, which would imply no error in the
coarse-grained approximation of the TST rate.
\begin{theorem}\label{Thm:NecessaryConditions}
If $\lambda^{\at} = \lambda^{\cg}$, then
$\mathbf{u}^{\at,r}/\|\mathbf{u}^{\at,r}\| = \mathbf{v}^{\cg}$.  \em In
addition,  we have that
\begin{equation}\label{Eq:ConstrainedForceDifference}
 \mathbf{B}(\mathbf{u}^{\at,c}_{\textup{\text{min}}} - \mathbf{u}^{\at,c})
 =
 \mathbf{0},
\end{equation}
where $\mathbf{u}^{\at,c}_{\textup{\text{min}}} :=
-\mathbf{C}^{-1}\mathbf{B}^{\T}\mathbf{u}^{\at,r}$.
\end{theorem}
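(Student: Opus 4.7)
The plan is to evaluate the completing-the-square identity \eqref{Eq:QuadraticPortionofAtomisticEnergy} at the unit-norm eigenvector $\mathbf{u} = \mathbf{u}^{\at}$, so that the left-hand side becomes simply $\lambda^{\at}$, and then to bound each piece of the right-hand side from below using the two structural facts already supplied by Theorem~\ref{Thm:WellBehaved}: namely that $\mathbf{C}$ is positive definite and that $\lambda^{\cg}$ is the (unique, simple, negative) smallest eigenvalue of $\mathbf{D}^{\cg}$. Positive-definiteness gives $(\mathbf{u}^{\at,c}-\mathbf{u}^{\at,c}_{\text{min}})^{\T}\mathbf{C}(\mathbf{u}^{\at,c}-\mathbf{u}^{\at,c}_{\text{min}})\ge 0$, while the Rayleigh bound gives $(\mathbf{u}^{\at,r})^{\T}\mathbf{D}^{\cg}\mathbf{u}^{\at,r}\ge\lambda^{\cg}\|\mathbf{u}^{\at,r}\|^{2}$. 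Combined with $\|\mathbf{u}^{\at,r}\|^{2}\le\|\mathbf{u}^{\at}\|^{2}=1$ and $\lambda^{\cg}<0$, these yield a three-step chain $\lambda^{\at}\ge(\mathbf{u}^{\at,r})^{\T}\mathbf{D}^{\cg}\mathbf{u}^{\at,r}\ge\lambda^{\cg}\|\mathbf{u}^{\at,r}\|^{2}\ge\lambda^{\cg}$, reproducing the inequality already established in Theorem~\ref{Thm:WellBehaved}.

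Under the equality hypothesis $\lambda^{\at}=\lambda^{\cg}$, every inequality in the chain must be saturated. Saturation of the $\mathbf{C}$-term, together with positive-definiteness of $\mathbf{C}$, forces $\mathbf{u}^{\at,c}=\mathbf{u}^{\at,c}_{\text{min}}$. Saturation of the Rayleigh bound forces $\mathbf{u}^{\at,r}$ into the eigenspace of $\mathbf{D}^{\cg}$ associated with $\lambda^{\cg}$, which is one-dimensional because $\lambda^{\cg}$ is simple, hence $\mathbf{u}^{\at,r}$ is parallel to $\mathbf{v}^{\cg}$. Saturation of the final step $\lambda^{\cg}\|\mathbf{u}^{\at,r}\|^{2}\ge\lambda^{\cg}$, using $\lambda^{\cg}\neq 0$, forces $\|\mathbf{u}^{\at,r}\|=1$, which combined with $\|\mathbf{u}^{\at}\|^{2}=\|\mathbf{u}^{\at,r}\|^{2}+\|\mathbf{u}^{\at,c}\|^{2}=1$ yields $\mathbf{u}^{\at,c}=\mathbf{0}$.

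Combining $\mathbf{u}^{\at,c}=\mathbf{0}$ with $\mathbf{u}^{\at,c}=\mathbf{u}^{\at,c}_{\text{min}}$ gives $\mathbf{u}^{\at,c}_{\text{min}}-\mathbf{u}^{\at,c}=\mathbf{0}$, and thus \eqref{Eq:ConstrainedForceDifference} follows immediately (in fact with the stronger conclusion that the argument of $\mathbf{B}$ itself vanishes). For the normalization claim, $\mathbf{u}^{\at,r}$ is a unit vector parallel to the unit vector $\mathbf{v}^{\cg}$, so $\mathbf{u}^{\at,r}=\pm\mathbf{v}^{\cg}$, and the sign convention $\mathbf{u}^{\at,r}\cdot\mathbf{v}^{\cg}\ge 0$ selects the $+$ sign, giving $\mathbf{u}^{\at,r}/\|\mathbf{u}^{\at,r}\|=\mathbf{v}^{\cg}$. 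The main thing to keep straight is that there are two logically independent sources of slack in the chain --- the deviation of $\mathbf{u}^{\at,c}$ from its energy-minimizing counterpart $\mathbf{u}^{\at,c}_{\text{min}}$, and the sub-unit norm of $\mathbf{u}^{\at,r}$ arising from a possibly non-zero $\mathbf{u}^{\at,c}$ --- and that saturating the equality forces both to vanish simultaneously, producing both conclusions of the theorem from a single identity.
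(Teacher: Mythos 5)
Your proof is correct, and it takes a genuinely different route from the paper's. The paper works from the repatom block of the eigenvector equation, i.e., from the identity $\mathbf{D}^{\cg}\mathbf{u}^{\at,r}=\lambda^{\at}\mathbf{u}^{\at,r}+\mathbf{B}(\mathbf{u}^{\at,c}_{\text{min}}-\mathbf{u}^{\at,c})$ in \eqref{Eq:CGIdealEigenvector}, and must therefore separately establish that $\|\mathbf{u}^{\at,r}\|\neq 0$ and that the cross term $\mathbf{u}^{\at,r}\cdot\mathbf{B}(\mathbf{u}^{\at,c}_{\text{min}}-\mathbf{u}^{\at,c})$ is non-positive; the latter is done by solving the constrained block for $\mathbf{B}\mathbf{u}^{\at,c}$ and checking that the resolvent difference $\mathbf{C}^{-1}-(\mathbf{C}-\lambda^{\at}\mathbf{I})^{-1}$ is positive definite. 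Saturating the resulting two-step chain gives the eigenvector claim and, after back-substitution, the vanishing of $\mathbf{B}(\mathbf{u}^{\at,c}_{\text{min}}-\mathbf{u}^{\at,c})$. You instead evaluate the completing-the-square identity \eqref{Eq:QuadraticPortionofAtomisticEnergy} at the normalized unstable mode, which packages the cross term into the manifestly non-negative form $(\mathbf{u}^{\at,c}-\mathbf{u}^{\at,c}_{\text{min}})\cdot\mathbf{C}(\mathbf{u}^{\at,c}-\mathbf{u}^{\at,c}_{\text{min}})$ and so dispenses with both auxiliary computations, at the cost of one extra link in the chain, $\lambda^{\cg}\|\mathbf{u}^{\at,r}\|^{2}\geq\lambda^{\cg}$, whose saturation forces $\|\mathbf{u}^{\at,r}\|=1$. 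Your appeals to Theorem~\ref{Thm:WellBehaved} (positive definiteness of $\mathbf{C}$, simplicity and minimality of $\lambda^{\cg}$) are legitimate since both facts are established beforehand. What your route buys is a strictly stronger conclusion: equality of the negative eigenvalues forces $\mathbf{u}^{\at,c}=\mathbf{u}^{\at,c}_{\text{min}}=\mathbf{0}$, not merely $\mathbf{B}(\mathbf{u}^{\at,c}_{\text{min}}-\mathbf{u}^{\at,c})=\mathbf{0}$. This is not a contradiction with the paper but a sharpening of the remark that follows its theorem: for the unstable mode one has $\mathbf{u}^{\at,c}_{\text{min}}-\mathbf{u}^{\at,c}=-\lambda^{\at}\mathbf{C}^{-1}\mathbf{u}^{\at,c}$ with $\mathbf{u}^{\at,c}=-(\mathbf{C}-\lambda^{\at}\mathbf{I})^{-1}\mathbf{B}^{\T}\mathbf{u}^{\at,r}$, so $\mathbf{B}(\mathbf{u}^{\at,c}_{\text{min}}-\mathbf{u}^{\at,c})=\mathbf{0}$ already implies $\mathbf{B}^{\T}\mathbf{u}^{\at,r}=\mathbf{0}$ and hence $\mathbf{u}^{\at,c}=\mathbf{0}$, confirming independently that the kernel of $\mathbf{B}$ offers no escape here and that your stronger conclusion is the true one.
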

\begin{proof}
Suppose that $\lambda^{\at} = \lambda^{\cg}$ and that
$\mathbf{u}^{\at,c}_{\text{min}}$ is as defined in the statement of the
theorem.  From the definition of $\mathbf{D}^{\at}$ in
\eqref{Eq:AtDynamicalMatrix}, we see that $\mathbf{D}^{\at}\mathbf{u}^{\at} =
\lambda^{\at}\mathbf{u}^{\at}$ implies that
\begin{equation*}
\mathbf{R}\mathbf{u}^{\at,r} +
\mathbf{B}\mathbf{u}^{\at,c} = \lambda^{\at}\mathbf{u}^{\at,r}.
\end{equation*}
  Using this
fact, we have from the definition of $\mathbf{D}^{\cg}$ in \eqref{Eq:CGDynMatrix} that
\begin{equation}\label{Eq:CGIdealEigenvector}
 \mathbf{D}^{\cg}\mathbf{u}^{\at,r}
 =
 \mathbf{R}\mathbf{u}^{\at,r} + \mathbf{B}\mathbf{u}^{\at,c}_{\text{min}}
 =
 \lambda^{\at}\mathbf{u}^{\at,r}
   + \mathbf{B}(\mathbf{u}^{\at,c}_{\text{min}} - \mathbf{u}^{\at,c}).
\end{equation}
It will be shown later in the proof that $\mathbf{u}^{\at,r} \cdot
\mathbf{B}(\mathbf{u}^{\at,c}_{\text{min}} -
\mathbf{u}^{\at,c}) \leq 0$ and that $\|\mathbf{u}^{\at,r}\| \neq 0$.  For now,
let us assume that these two statements are true.  As $\lambda^{\cg}$
is the absolute minimum of the quadratic form
$\mathbf{v}\cdot\mathbf{D}^{\cg}\mathbf{v}$ subject to the constraint
$\|\mathbf{v}\| = 1$, we may use  \eqref{Eq:CGIdealEigenvector} and our
recent assumptions to show that
\begin{equation}\label{Eq:DirectEigenvalueInequality}
 \lambda^{\cg}
 \leq
 \frac{\mathbf{u}^{\at,r} \cdot \mathbf{D}^{\cg}\mathbf{u}^{\at,r}}
      {\|\mathbf{u}^{\at,r}\|^{2}}
 = \lambda^{\at} +
 \frac{\mathbf{u}^{\at,r}\cdot\mathbf{B}(\mathbf{u}^{\at,c}_{\text{min}} -
                                         \mathbf{u}^{\at,c})}
      {\|\mathbf{u}^{\at,r}\|^{2}}
 \leq
 \lambda^{\at}.
\end{equation}
Since $\lambda^{\at} = \lambda^{\cg}$, all of the inequalities in the above
result are equalities.  The first inequality that is now an equality implies
that $\mathbf{u}^{\at,r}/\|\mathbf{u}^{\at,r}\|$ is a normalized eigenvector of
$\mathbf{D}^{\cg}$ associated with $\lambda^{\cg}$.
The choice $\mathbf{v}^{\cg} := \mathbf{u}^{\at,r}/\|\mathbf{u}^{\at,r}\|$ satisfies $\mathbf{u}^{\at,r} \cdot
\mathbf{v}^{\cg} \geq 0$.
Making the appropriate substitutions into
\eqref{Eq:CGIdealEigenvector}, we now have that
\begin{equation*}
 \mathbf{D}^{\cg}\mathbf{v}^{\cg} = \lambda^{\cg}\mathbf{v}^{\cg} +
   \frac{\mathbf{B}(\mathbf{u}^{\at,c}_{\text{min}} - \mathbf{u}^{\at,c})}
        {\|\mathbf{u}^{\at,r}\|}
\end{equation*}
from which we immediately see that $\mathbf{B}(\mathbf{u}^{\at,c}_{\text{min}} -
\mathbf{u}^{\at,c}) = \mathbf{0}$.

To finish the proof, we will now prove the two claims made earlier in the proof.
Observe that
\begin{equation}\label{ba}
 \mathbf{u}^{\at,r}\cdot\mathbf{B}\mathbf{u}^{\at,c}_{\text{min}}
 =
 -\mathbf{u}^{\at,r} \cdot
    \mathbf{B}\mathbf{C}^{-1}\mathbf{B}^{\T}\mathbf{u}^{\at,r}
 =
 -\mathbf{B}^{\T}\mathbf{u}^{\at,r} \cdot
   \mathbf{C}^{-1}\mathbf{B}^{\T}\mathbf{u}^{\at,r}.
\end{equation}
Now, we may use the definition of
$\mathbf{D}^{\at}$ and the equation $\mathbf{D}^{\at}\mathbf{u}^{\at} =
\lambda^{\at}\mathbf{u}^{\at}$ to show that
$\mathbf{B}^{\T}\mathbf{u}^{\at,r}  + \mathbf{C}\mathbf{u}^{\at,c} =
\lambda^{\at}\mathbf{u}^{\at,c}$.  If $\mathbf{u}^{\at,r}$ were a
zero vector, we would arrive at the contradictory conclusion
that $\mathbf{C}$ has a negative eigenvalue.  As this is not
the case, $\mathbf{u}^{\at,r}$ is not a zero vector so that
$\|\mathbf{u}^{\at,r}\| \neq 0$.  Rearranging the equation arising from the
definition of $\mathbf{D}^{\at}$, we have that
\begin{equation}\label{aa}
\mathbf{B}^{\T}\mathbf{u}^{\at,r}  + (\mathbf{C} -
\lambda^{\at}\mathbf{I})\mathbf{u}^{\at,c} =
\mathbf{0},
\end{equation}
where $\mathbf{I}$ is an identity matrix of the appropriate
dimensions.  The eigenvalues of $(\mathbf{C} - \lambda^{\at}\mathbf{I})$ may
easily be shown to be the eigenvalues of $\mathbf{C}$ plus $|\lambda^{\at}|$
with the same corresponding eigenvectors as we are only adding a scalar multiple
of the identity matrix to $\mathbf{C}$. Since $\mathbf{C}$ is a
positive-definite matrix and all of its eigenvalues are positive, adding the
positive number $|\lambda^{\at}|$ to the eigenvalues of $\mathbf{C}$ does not
change their sign. Hence, the eigenvalues of $(\mathbf{C} -
\lambda^{\at}\mathbf{I})$ are all positive, so this matrix is invertible. Thus,
after some further manipulation of \eqref{aa}, we can show that
\begin{equation}\label{bb}
 \mathbf{B}\mathbf{u}^{\at,c}
 =
 -\mathbf{B}(\mathbf{C} -
             \lambda^{\at}\mathbf{I})^{-1}\mathbf{B}^{\T}\mathbf{u}^{\at,r}.
\end{equation}
Combining \eqref{ba} with \eqref{bb}, we can obtain
\begin{equation*}
 \mathbf{u}^{\at,r} \cdot \mathbf{B}(\mathbf{u}^{\at,c}_{\text{min}} -
   \mathbf{u}^{\at,c})
 =
 -\mathbf{B}^{\T}\mathbf{u}^{\at,r} \cdot
 (\mathbf{C}^{-1} - (\mathbf{C} - \lambda^{\at}\mathbf{I})^{-1})\mathbf{B}^{\T}
   \mathbf{u}^{\at,r}.
\end{equation*}
From the earlier comment regarding the eigenvalues and eigenvectors of
$(\mathbf{C} - \lambda^{\at}\mathbf{I})$, we see that the matrix
$(\mathbf{C}^{-1} - (\mathbf{C} - \lambda^{\at}\mathbf{I})^{-1})$ is in fact
positive definite. Note that this statement follows from the assumption that
$\lambda^{\at}$ is strictly negative, but allowing $\lambda^{\at} = 0$ does not
change the following result as the aforementioned matrix would still be positive
semi-definite.  In either case,
\begin{equation*}
 \mathbf{u}^{\at,r} \cdot \mathbf{B}(\mathbf{u}^{\at,c}_{\text{min}} -
   \mathbf{u}^{\at,c})
 \leq 0.
\end{equation*}
The claims are proven, so the proof is complete.  
\end{proof}
The converse of the above theorem is true as well; that is, if
\eqref{Eq:ConstrainedForceDifference} holds and $\mathbf{u}^{\at,r}$ is an
eigenvector of $\mathbf{D}^{\cg}$, then $\lambda^{\at} = \lambda^{\cg}$.
A simple proof of this statement is to substitute
the assumption of the form of $\mathbf{u}^{\at,r}$ and
\eqref{Eq:ConstrainedForceDifference} into
\eqref{Eq:CGIdealEigenvector}.  In fact,
\eqref{Eq:ConstrainedForceDifference} alone is sufficient to prove that the
eigenvalues must be identical.  The theorem, then, shows that in order for no
error to be made in the coarse-graining approximation of the TST rate that the
constrained atoms in the unstable eigenmode must interact with the repatom
region as if the constrained atoms were in their relaxed configuration.  Note
that the result does not necessarily imply that $\mathbf{u}^{\at,c}_{\text{min}}
= \mathbf{u}^{\at,c}$ as the kernel of $\mathbf{B}$ may be non-trivial.  As
$\mathbf{B}$ is affected by the range of interactions among the constituents in
a system, it is not difficult to construct an example where the kernel of
this matrix would be non-trivial.

One particular case of interest for an exact coarse-grained approximation of
the TST rate occurs when the only non-zero components in the unstable eigenmode
are those that lie in the chosen repatom region, or equivalently, when
$\mathbf{u}^{\at,c} = \mathbf{0}$. In such a case, the behavior of interest is
extremely localized, so we should not be surprised that we do not lose any
information by coarse-graining the components of the system that have no
involvement in the transition.  This implies that the coarse-graining method
works well when the unstable eigenmode is localized; that is, we should expect
the coarse-graining scheme to be accurate when the repatom region contains the
atoms which have the largest contribution to the norm of the unstable eigenmode
$\mathbf{u}^{\at}$. Accurately approximating the TST rate in such an instance
was the primary motivation for the development of this method. It is also
interesting to note that  \eqref{Eq:DirectEigenvalueInequality} provides
another proof that $\lambda^{\cg} \leq \lambda^{\at}$.

The presence of $\mathbf{u}^{\at,c}_{\text{min}}$ in the above condition for
no error to be made in the coarse-graining approximation can be explained
through the following theorem:
\begin{theorem}\label{Thm:Embedding}
Let $\mathbf{v} \in \R^{dN^{r}}$ and let
\begin{equation*}
 \mathbf{v}_{\textup{\text{min}}} :=
   \begin{bmatrix}
    \mathbf{v} \\
    -\mathbf{C}^{-1}\mathbf{B}^{\T}\mathbf{v}
   \end{bmatrix}.
\end{equation*}
Then,
\begin{equation*}
 \mathbf{D}^{\at}\mathbf{v}_{\textup{\text{min}}} =
   \begin{bmatrix}
    \mathbf{D}^{\cg}\mathbf{v} \\
    \mathbf{0}
   \end{bmatrix}.
\end{equation*}
Thus, $\mathbf{v}\cdot\mathbf{D}^{\cg}\mathbf{v} =
\mathbf{v}_{\textup{\text{min}}}\cdot\mathbf{D}^{\at}\mathbf{v}_{\textup{\text{
min}}}$. In particular, $\lambda^{\cg} =
\mathbf{v}^{\cg}\cdot\mathbf{D}^{\cg}\mathbf{v}^{\cg}
= \mathbf{v}^{\cg}_{\textup{\text{min}}} \cdot
\mathbf{D}^{\at}\mathbf{v}^{\cg}_{\textup{\text{min}}}$, where
\begin{equation*}
 \mathbf{v}^{\cg}_{\textup{\text{min}}} :=
   \begin{bmatrix}
    \mathbf{v}^{\cg} \\
    -\mathbf{C}^{-1}\mathbf{B}^{\T}\mathbf{v}^{\cg}
   \end{bmatrix}.
\end{equation*}
\end{theorem}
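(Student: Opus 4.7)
The plan is to verify the stated identities by direct block-matrix computation, since the vector $\mathbf{v}_{\text{min}}$ is designed precisely so that its constrained block is the energy-minimizing response to $\mathbf{v}$, cf. \eqref{Eq:RelaxedConstrainedAtoms}. There is no substantial obstacle here: once one writes $\mathbf{D}^{\at}$ in the block form \eqref{Eq:AtDynamicalMatrix} and multiplies, both identities fall out mechanically.

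First I would compute $\mathbf{D}^{\at}\mathbf{v}_{\text{min}}$ by multiplying the $2\times 2$ block matrix into the stacked vector. The upper block becomes $\mathbf{R}\mathbf{v}+\mathbf{B}(-\mathbf{C}^{-1}\mathbf{B}^{\T}\mathbf{v})=(\mathbf{R}-\mathbf{B}\mathbf{C}^{-1}\mathbf{B}^{\T})\mathbf{v}$, which is exactly $\mathbf{D}^{\cg}\mathbf{v}$ by the definition \eqref{Eq:CGDynMatrix}. The lower block becomes $\mathbf{B}^{\T}\mathbf{v}+\mathbf{C}(-\mathbf{C}^{-1}\mathbf{B}^{\T}\mathbf{v})=\mathbf{B}^{\T}\mathbf{v}-\mathbf{B}^{\T}\mathbf{v}=\mathbf{0}$, using only that $\mathbf{C}$ is invertible (which Theorem~\ref{Thm:WellBehaved} guarantees). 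This gives the first displayed identity.

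For the scalar identity, I would simply take the Euclidean inner product of $\mathbf{v}_{\text{min}}$ with $\mathbf{D}^{\at}\mathbf{v}_{\text{min}}$ block by block. The constrained block contributes zero because the second block of $\mathbf{D}^{\at}\mathbf{v}_{\text{min}}$ vanishes, and the repatom block contributes $\mathbf{v}\cdot\mathbf{D}^{\cg}\mathbf{v}$. Hence $\mathbf{v}_{\text{min}}\cdot\mathbf{D}^{\at}\mathbf{v}_{\text{min}}=\mathbf{v}\cdot\mathbf{D}^{\cg}\mathbf{v}$, with no assumption on $\mathbf{v}$ other than that it lives in $\R^{dN^{r}}$.

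Finally, to obtain the special case $\lambda^{\cg}=\mathbf{v}^{\cg}_{\text{min}}\cdot\mathbf{D}^{\at}\mathbf{v}^{\cg}_{\text{min}}$, I would specialize the identity just derived to $\mathbf{v}=\mathbf{v}^{\cg}$. Since $\mathbf{v}^{\cg}$ is a normalized eigenvector of $\mathbf{D}^{\cg}$ with eigenvalue $\lambda^{\cg}$, one has $\mathbf{v}^{\cg}\cdot\mathbf{D}^{\cg}\mathbf{v}^{\cg}=\lambda^{\cg}\|\mathbf{v}^{\cg}\|^{2}=\lambda^{\cg}$, which completes the argument. The only conceptual content, rather than routine calculation, is the observation that this embedding $\mathbf{v}\mapsto\mathbf{v}_{\text{min}}$ realizes $\mathbf{D}^{\cg}$ as the restriction of $\mathbf{D}^{\at}$ to the subspace where the constrained coordinates are relaxed against the repatom coordinates, which is exactly the physical content mentioned after \eqref{Eq:RelaxedConstrainedAtoms}.
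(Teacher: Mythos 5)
Your proposal is correct and follows exactly the paper's own argument: a direct block-matrix multiplication showing the constrained block of $\mathbf{D}^{\at}\mathbf{v}_{\textup{\text{min}}}$ vanishes while the repatom block yields $\mathbf{D}^{\cg}\mathbf{v}$, with the scalar identities following immediately. Nothing further is needed.
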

\begin{proof}
Let the vectors be as defined in the problem statement.  Then,
\begin{equation*}
  \mathbf{D}^{\at}\mathbf{v}_{\text{min}}
  =
  \begin{bmatrix}
   \mathbf{R} & \mathbf{B} \\
   \mathbf{B}^{\T} & \mathbf{C}
  \end{bmatrix}
  \begin{bmatrix}
   \mathbf{v} \\
   -\mathbf{C}^{-1}\mathbf{B}^{\T}\mathbf{v}
  \end{bmatrix}
  =
  \begin{bmatrix}
   (\mathbf{R} - \mathbf{B}\mathbf{C}^{-1}\mathbf{B}^{\T})\mathbf{v} \\
   \mathbf{0}
  \end{bmatrix}
  =
  \begin{bmatrix}
   \mathbf{D}^{\cg}\mathbf{v} \\
   \mathbf{0}
  \end{bmatrix}.
\end{equation*}
The remaining results are immediate consequences of the above equation. 
\end{proof}

Recall from  \eqref{Eq:RelaxedConstrainedAtoms} that the matrix
$-\mathbf{C}^{-1}\mathbf{B}^{\T}$ yields the relaxed constrained atom
configuration for a given repatom configuration; that is, the matrix finds the
displacement vector for the constrained atoms that minimizes the total energy of
the configuration. Thus, using the notation in the above theorem, it has been
shown that $\mathbf{v} \mapsto \mathbf{v}_{\text{min}}$ is a linear,
one-to-one mapping of the coarse-grained phase space into a subspace of the
fully atomistic phase space that preserves energy differences between
configurations as well as forces.  We see then that the coarse-graining method
removes the constrained degrees of freedom while still taking into account their
presence by always considering the constrained atoms to be in their
energy-minimizing state. The vector $\mathbf{B}(\mathbf{u}^{\at,c}_{\text{min}}
- \mathbf{u}^{\at,c})$ tells how much the constrained system's actual behavior
through the transition state deviates from this energy-minimizing assumption.
This difference could also be interpreted as a measure of how well the coarsened
system captures the boundary conditions for the repatom region due to the
long-range effect of the constrained atoms.  Note that this result was a
consequence of the form of the dynamical matrix as was discussed at the end of
the derivation of the dividing surface for the coarse-grained system.
Also, recall that the range of the function $\mathbf{v} \mapsto
\mathbf{v}_{\text{min}}$ which came about from the use of this method is
extremely similar to the subspace of the fully atomistic phase space
utilized in the static version of the hot-QC methods.

We can use elements of the first theorem in this section to derive an
expression for the difference between $\lambda^{\at}$ and $\lambda^{\cg}$:
\begin{theorem}\label{Thm:ErrorBound}
In general,
\begin{equation}\label{est}
 \lambda^{\cg} - \lambda^{\at}
 =
 \frac{\mathbf{v}^{\cg} \cdot
       \mathbf{B}(\mathbf{u}^{\at,c}_{\textup{min}} - \mathbf{u}^{\at,c})}
      {\mathbf{v}^{\cg} \cdot \mathbf{u}^{\at,r}}.
\end{equation}
%
%
%
\end{theorem}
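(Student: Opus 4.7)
The plan is to start from equation \eqref{Eq:CGIdealEigenvector}, which was established in the proof of Theorem~\ref{Thm:NecessaryConditions} without ever using the hypothesis $\lambda^{\at}=\lambda^{\cg}$, namely
\begin{equation*}
 \mathbf{D}^{\cg}\mathbf{u}^{\at,r}
 = \lambda^{\at}\mathbf{u}^{\at,r}
   + \mathbf{B}(\mathbf{u}^{\at,c}_{\text{min}} - \mathbf{u}^{\at,c}).
\end{equation*}
Since this identity is a direct algebraic consequence of the block form of $\mathbf{D}^{\at}$ and the definitions of $\mathbf{D}^{\cg}$ and $\mathbf{u}^{\at,c}_{\text{min}}$, it remains available in the general setting of Theorem~\ref{Thm:ErrorBound}.

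Next, I would take the inner product of both sides with the normalized unstable eigenmode $\mathbf{v}^{\cg}$ of $\mathbf{D}^{\cg}$. On the left, the symmetry of $\mathbf{D}^{\cg}$ together with the eigenvalue relation $\mathbf{D}^{\cg}\mathbf{v}^{\cg}=\lambda^{\cg}\mathbf{v}^{\cg}$ gives
\begin{equation*}
 \mathbf{v}^{\cg}\cdot\mathbf{D}^{\cg}\mathbf{u}^{\at,r}
 = \mathbf{D}^{\cg}\mathbf{v}^{\cg}\cdot\mathbf{u}^{\at,r}
 = \lambda^{\cg}\, \mathbf{v}^{\cg}\cdot\mathbf{u}^{\at,r},
\end{equation*}
while the right side becomes $\lambda^{\at}\,\mathbf{v}^{\cg}\cdot\mathbf{u}^{\at,r} + \mathbf{v}^{\cg}\cdot\mathbf{B}(\mathbf{u}^{\at,c}_{\text{min}} - \mathbf{u}^{\at,c})$. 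Subtracting and collecting yields
\begin{equation*}
 (\lambda^{\cg}-\lambda^{\at})\,\mathbf{v}^{\cg}\cdot\mathbf{u}^{\at,r}
 = \mathbf{v}^{\cg}\cdot\mathbf{B}(\mathbf{u}^{\at,c}_{\text{min}} - \mathbf{u}^{\at,c}),
\end{equation*}
which is \eqref{est} after dividing by $\mathbf{v}^{\cg}\cdot\mathbf{u}^{\at,r}$.

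The main thing to be careful about is the denominator $\mathbf{v}^{\cg}\cdot\mathbf{u}^{\at,r}$: the formula only makes sense when it is nonzero. By the sign convention introduced at the start of the section we already have $\mathbf{v}^{\cg}\cdot\mathbf{u}^{\at,r}\ge 0$, and Theorem~\ref{Thm:NecessaryConditions} guarantees $\|\mathbf{u}^{\at,r}\|\ne 0$, so the denominator vanishes only in the degenerate case where the repatom projection of the atomistic unstable mode is orthogonal to the coarse unstable mode; in that borderline situation one would read \eqref{est} as the cleared identity above, which still holds and forces the numerator to vanish as well. Apart from this mild caveat the derivation is a short symmetry-and-eigenvalue computation, and the real content of the theorem is packaged in the already-established formula \eqref{Eq:CGIdealEigenvector}.
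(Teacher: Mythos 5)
Your computation is exactly the paper's: both arguments start from \eqref{Eq:CGIdealEigenvector}, take the inner product with $\mathbf{v}^{\cg}$, use the symmetry of $\mathbf{D}^{\cg}$ to write $\mathbf{v}^{\cg}\cdot\mathbf{D}^{\cg}\mathbf{u}^{\at,r}=\lambda^{\cg}\,\mathbf{v}^{\cg}\cdot\mathbf{u}^{\at,r}$, and divide.

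The one place you stop short is the denominator. The facts you cite --- the sign convention $\mathbf{v}^{\cg}\cdot\mathbf{u}^{\at,r}\ge 0$ and $\|\mathbf{u}^{\at,r}\|\neq 0$ --- do not exclude $\mathbf{v}^{\cg}\cdot\mathbf{u}^{\at,r}=0$, and you leave that case as a ``caveat'' in which \eqref{est} is to be read as the cleared identity (where it degenerates to $0=0$ and the quotient is undefined). Since the theorem asserts the quotient form ``in general,'' this case must actually be ruled out, and it can be: if $\mathbf{v}^{\cg}\cdot\mathbf{u}^{\at,r}=0$, then $\mathbf{u}^{\at,r}$ lies in the span of the eigenvectors of the symmetric matrix $\mathbf{D}^{\cg}$ with positive eigenvalues, so $\mathbf{u}^{\at,r}\cdot\mathbf{D}^{\cg}\mathbf{u}^{\at,r}\ge 0$; on the other hand, \eqref{Eq:QuadraticPortionofAtomisticEnergy} together with the positive definiteness of $\mathbf{C}$ gives
\begin{equation*}
 \mathbf{u}^{\at,r}\cdot\mathbf{D}^{\cg}\mathbf{u}^{\at,r}
 \le
 \mathbf{u}^{\at}\cdot\mathbf{D}^{\at}\mathbf{u}^{\at}
 =
 \lambda^{\at}<0,
\end{equation*}
a contradiction. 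Adding this short argument closes the only gap; with it, your proof coincides with the one in the paper.
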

\begin{proof}
Recall from \eqref{Eq:CGIdealEigenvector} that
\begin{equation*}
 \mathbf{D}^{\cg}\mathbf{u}^{\at,r}
 =
 \lambda^{\at}\mathbf{u}^{\at,r}
   + \mathbf{B}(\mathbf{u}^{\at,c}_{\text{min}} - \mathbf{u}^{\at,c}).
\end{equation*}
Taking the dot product of both sides of the equation with $\mathbf{v}^{\cg}$,
we have that
\begin{equation*}
 \lambda^{\cg}\mathbf{v}^{\cg} \cdot \mathbf{u}^{\at,r}
 =
 \mathbf{v}^{\cg} \cdot \mathbf{D}^{\cg}\mathbf{u}^{\at,r}
 =
 \lambda^{\at}\mathbf{v}^{\cg} \cdot \mathbf{u}^{\at,r}
   +  \mathbf{v}^{\cg} \cdot
      \mathbf{B}(\mathbf{u}^{\at,c}_{\text{min}} - \mathbf{u}^{\at,c}).
\end{equation*}
The result \eqref{est} follows if $\mathbf{v}^{\cg} \cdot
\mathbf{u}^{\at,r} \neq 0.$  To show this, note that if $\mathbf{v}^{\cg}
\cdot \mathbf{u}^{\at,r} = 0$, then $\mathbf{u}^{\at,r} \cdot
\mathbf{D}^{\cg}\mathbf{u}^{\at,r} \geq 0$ as $\mathbf{v}^{\cg}$ is the only
eigenvector of the symmetric matrix $\mathbf{D}^{\cg}$ with a negative
eigenvalue. However, by \eqref{Eq:QuadraticPortionofAtomisticEnergy} we have
that
\begin{equation*}
 \mathbf{u}^{\at,r} \cdot \mathbf{D}^{\cg} \mathbf{u}^{\at,r}
 \leq
 \mathbf{u}^{\at} \cdot \mathbf{D}^{\at} \mathbf{u}^{\at}
 =
 \lambda^{\at} < 0.
\end{equation*}
Thus, we have a contradiction, so the claim is proven.  
\end{proof}

The above error \eqref{est} can be broken down into three components: namely,
the error in the approximation of $\lambda^{\at}$ by $\lambda^{\cg}$ is affected
by the rotation of the dividing surface in the coarse-grained phase space, how
much of the essential components of the transition are captured in the repatom
region, and how well the unstable eigenmode is approximated by the
coarsened system.  We have already mentioned that the
$\mathbf{B}(\mathbf{u}^{\at,c}_{\text{min}} - \mathbf{u}^{\at,c})$
quantity is a measure of this third error.  The dot product of this
quantity with $\mathbf{v}^{\cg}$ picks out the portion of the resulting force
that impacts the transition.  The remaining errors are reflected in
the $\mathbf{u}^{\at,r} \cdot \mathbf{v}^{\cg}$ term.  Recall that
$\mathbf{u}^{\at,r}$ is the repatom component of the normal vector to the
dividing surface in the fully atomistic phase space while $\mathbf{v}^{\cg}$ is
the normal vector to the coarse-grained dividing surface.  The geometric
definition of the dot product states that
\begin{equation*}
 \mathbf{u}^{\at,r} \cdot \mathbf{v}^{\cg}
 =
 \|\mathbf{u}^{\at,r}\| \cos(\theta),
\end{equation*}
where $\theta$ is the angle between $\mathbf{u}^{\at,r}$ and
$\mathbf{v}^{\cg}$.  The angle $\theta$ represents how much the dividing
surface is rotated as it is projected into the coarse-grained phase space.  As
the mismatch between the direction of the vectors $\mathbf{u}^{\at,r}$ and
$\mathbf{v}^{\cg}$ increases, the error in the coarse-grained approximation of
the TST rate will increase.  Physically, this increase is caused by the
coarse-grained dividing surface passing through a lower-energy region of the
phase space as a result of the rotation.  The remaining portion of the error
term, $\|\mathbf{u}^{\at,r}\|^{-1}$, characterizes how much of the essential
components of the transition are captured within the repatom region as has
been mentioned earlier.  Note that the magnitude of an individual component
of the unstable eigenmode determines the relative importance of that
component to the transition. With these three errors in mind, we could also
write the error \eqref{est} found in the theorem as
\begin{equation*}
  \lambda^{\at} - \lambda^{\cg}
  =
  \frac{\mathbf{v}^{\cg} \cdot
        \mathbf{B}(\mathbf{u}^{\at,c}_{\text{min}} - \mathbf{u}^{\at,c})}
       {\|\mathbf{u}^{\at,r}\|\cos(\theta)}.
\end{equation*}
%

 Above, it was shown that by
including all of the atoms that contribute significantly to the localized
transition, the coarse-grained approximation of the TST rate would be accurate.
The error derived in Theorem \ref{Thm:ErrorBound} suggests that the error
in the coarse-grained approximation can be further reduced by choosing the
repatom region in such a way so as to minimize the error due to
$\mathbf{v}^{\cg} \cdot \mathbf{B}(\mathbf{u}^{\at,c}_{\text{min}} -
\mathbf{u}^{\at,c})$.  Additional refinement of the repatom region to minimize
this error contribution would be similar to what is already done in the
quasicontinuum methods when choosing a mesh for the continuum region and will be
demonstrated in the next section on numerical results.  More interestingly, this
error formulation seems to imply the possibility that the coarse-graining
problem may be approached with the primary goal of minimizing
$\mathbf{v}^{\cg} \cdot \mathbf{B}(\mathbf{u}^{\at,c}_{\text{min}}
- \mathbf{u}^{\at,c})$. In such an approach, it may not be as necessary
to fully capture the localized region of interest in the repatom region provided
this boundary condition norm can be made sufficiently small.  This
perspective on the error suggests that there might be
problems for which this strategy is well-suited. The tradeoff between
these two terms will be further discussed below.

The relative error between the two transition rates depends on the ratio
between $\lambda^{\cg}$ and $\lambda^{\at}$.  We can, of course, use the above
theorem to write this ratio as
\begin{equation*}
 \frac{\lambda^{\cg}}{\lambda^{\at}}
 =
 1 + \frac{1}{|\lambda^{\at}|}
   \frac{\mathbf{v}^{\cg} \cdot
         \mathbf{B}(\mathbf{u}^{\at,c}_{\text{min}} - \mathbf{u}^{\at,c})}
        {\mathbf{u}^{\at,r} \cdot \mathbf{v}^{\cg}}.
\end{equation*}
\section{Numerical Results}

\begin{figure}[h]
\begin{center}
\subfigure[$s =
1.02$\label{fig:FractureUnstableEigenmode:Ex1}]{\includegraphics[width= 6.75
cm]{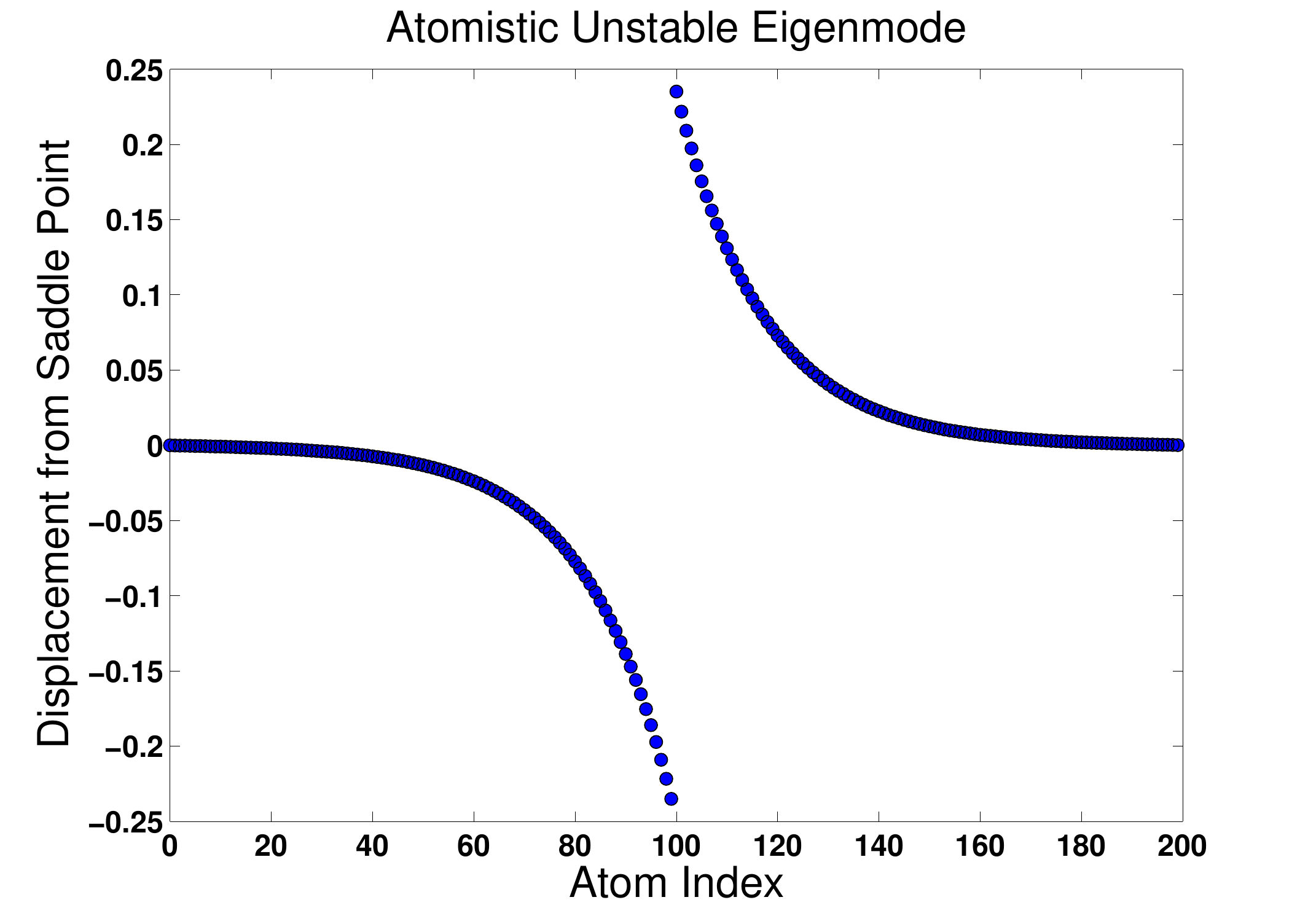}}
~
\subfigure[$s =
1.035$\label{fig:FractureUnstableEigenmode:Ex2}]{\includegraphics[width= 6.75
cm]{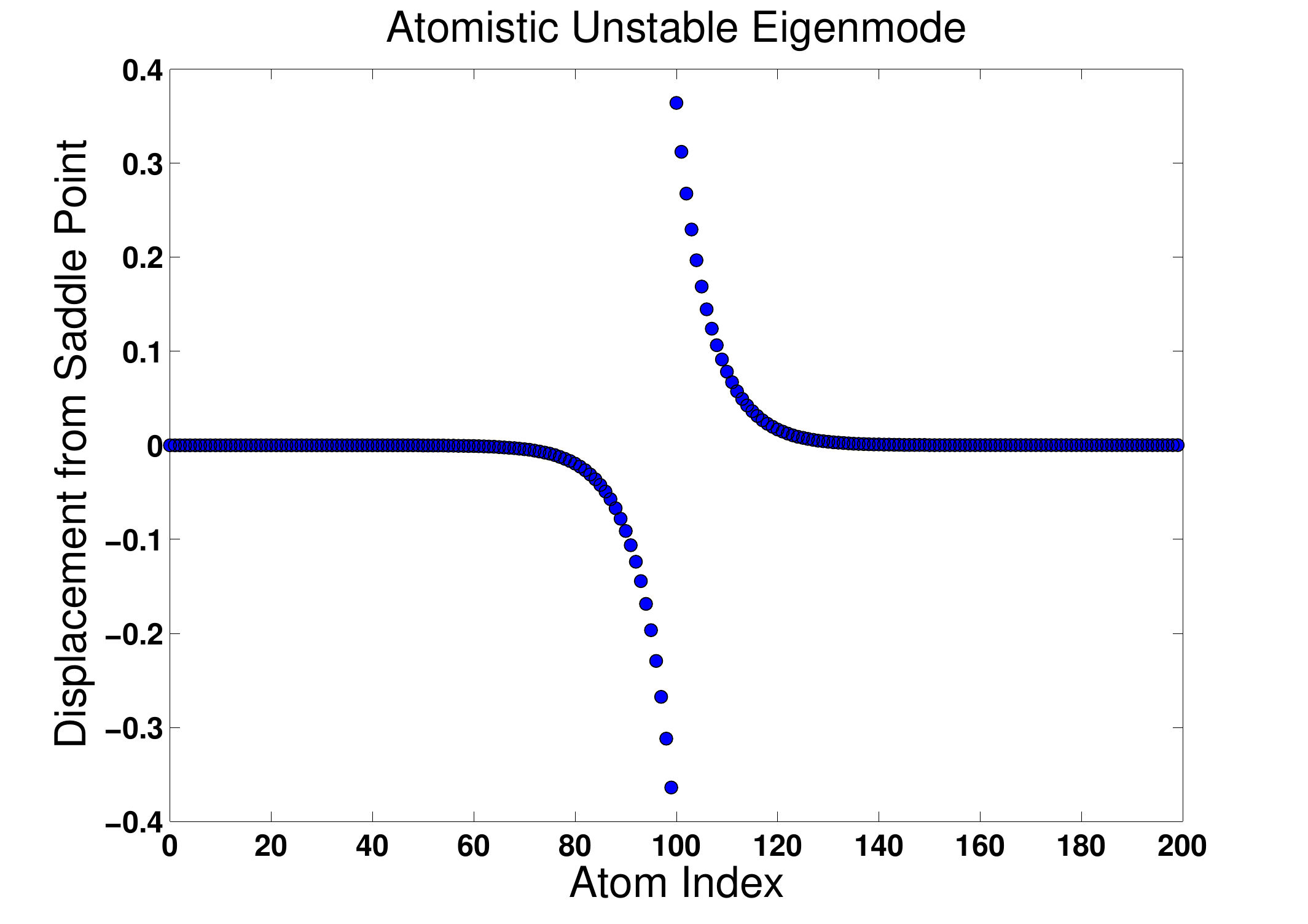}}

\end{center}
\caption{The unstable eigenmode for the fully
atomistic 1D chain for two different tensile
strains determined by the scalar $s$.}\label{fig:FractureUnstableEigenmode}
\end{figure}

\begin{figure}[t]
    \centering
    \subfigure{\includegraphics[scale=0.5]{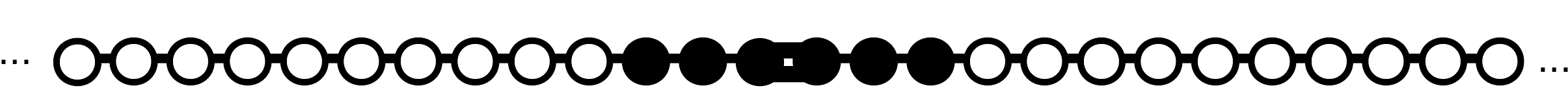}}
    \subfigure{\includegraphics[scale=0.5]{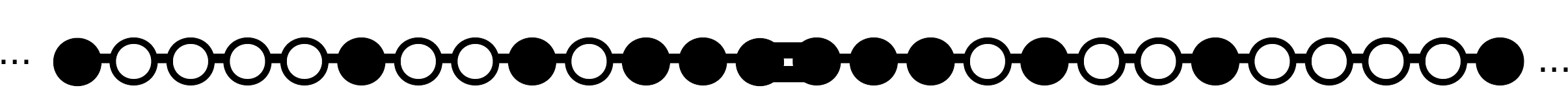}}
    \caption{Illustration of the localized and delocalized
           coarse-graining schemes for a core region containing 6
           atoms. Circles represent atoms in the system: filled
           circles represent repatoms while the empty circles represent
           constrained atoms.  The core region is the collection of the 6
           contiguous repatoms in the center of the chain.  The weakened bond
           is represented by the set of two lines connecting the two central
           atoms in the figure.}
    \label{fig:MeshSchemes}
\end{figure}

\begin{figure}[t]
\begin{center}
\subfigure[$s =
1.02$\label{fig:FractureCGUnstableEigenmode:Ex2}]{\includegraphics[width= 6.75
cm]{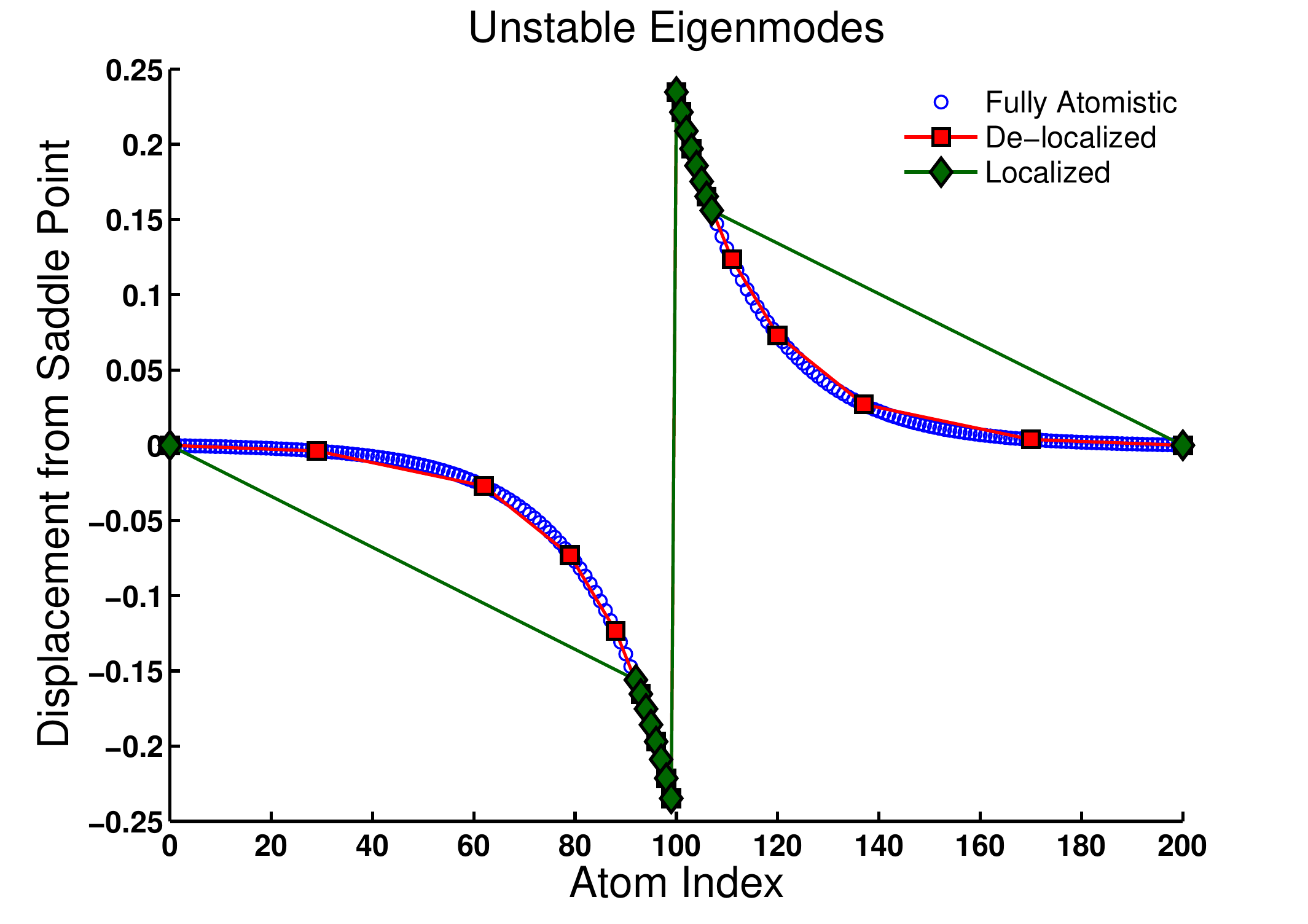}}
~
\subfigure[$s =
1.035$\label{fig:FractureCGUnstableEigenmode:Ex1}]{\includegraphics[width= 6.75
cm]{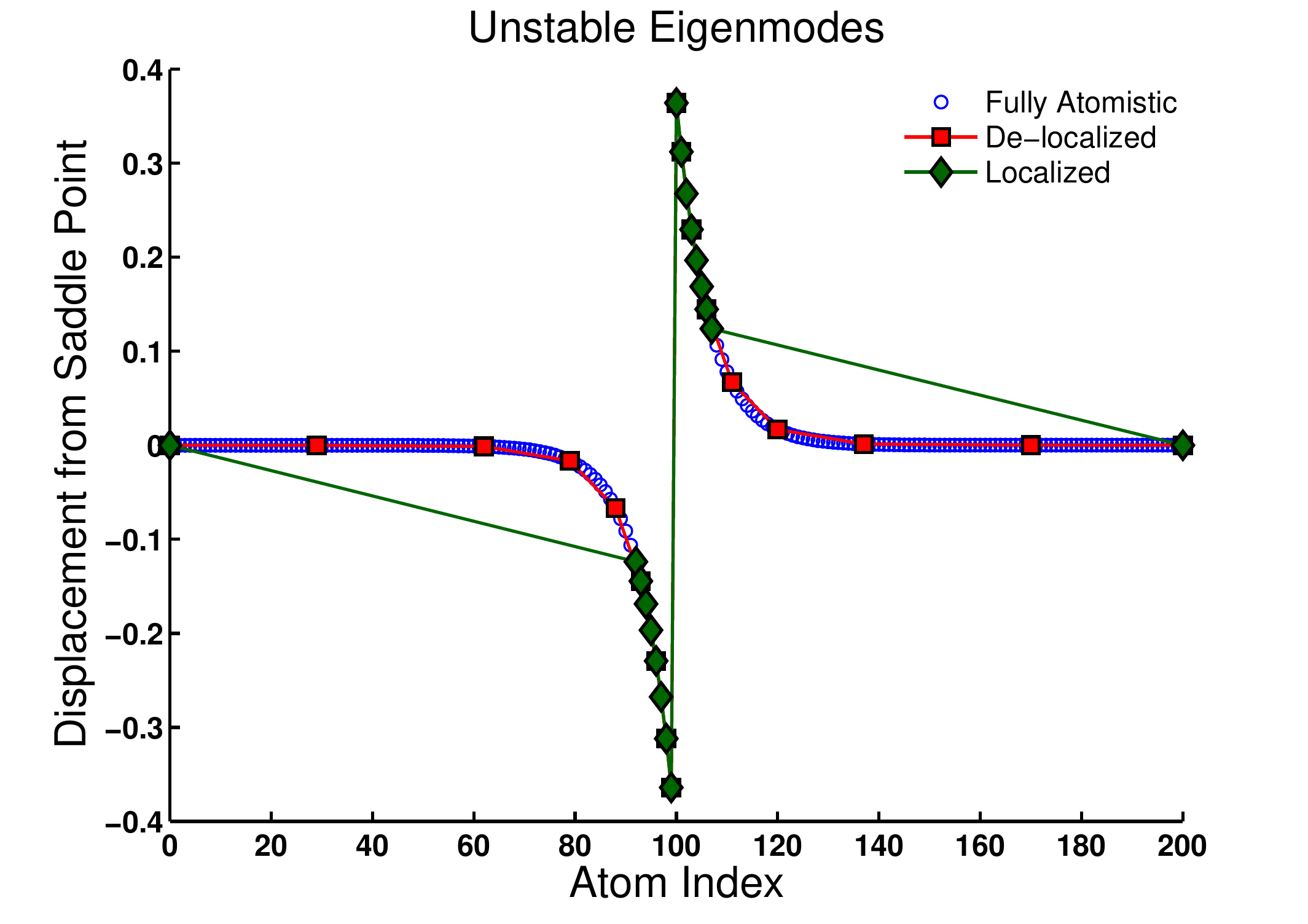}}
\end{center}
\caption{Comparison of the fully atomistic unstable eigenmode and the
coarse-grained unstable eigenmodes computed for the localized and delocalized
coarse-graining schemes for the two different tensile strains.  The repatoms for each of
the coarse-graining schemes are indicated by the markers.  The localized and delocalized
coarse-graining schemes contain the same number of degrees of freedom in both graphs.
}\label{fig:FractureCGUnstableEigenmode}
\end{figure}

\begin{figure}[t]
\begin{center}
\subfigure[$s =
1.02$\label{fig:FractureEigenvalueComparison:Ex2}]{\includegraphics[width= 6.75
cm]{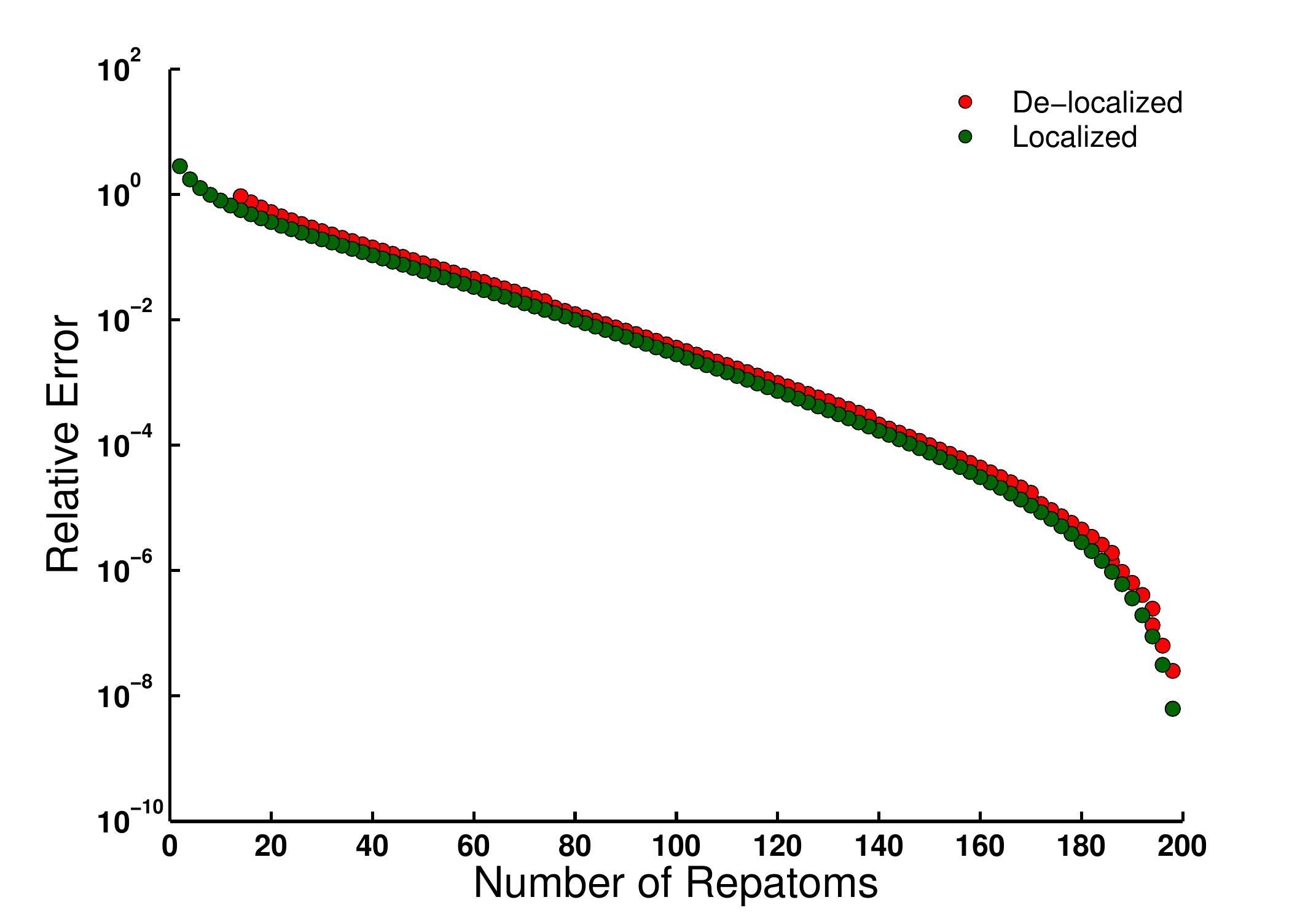}}
~
\subfigure[$s =
1.035$\label{fig:FractureEigenvalueComparison:Ex1}]{\includegraphics[width= 6.75
cm]{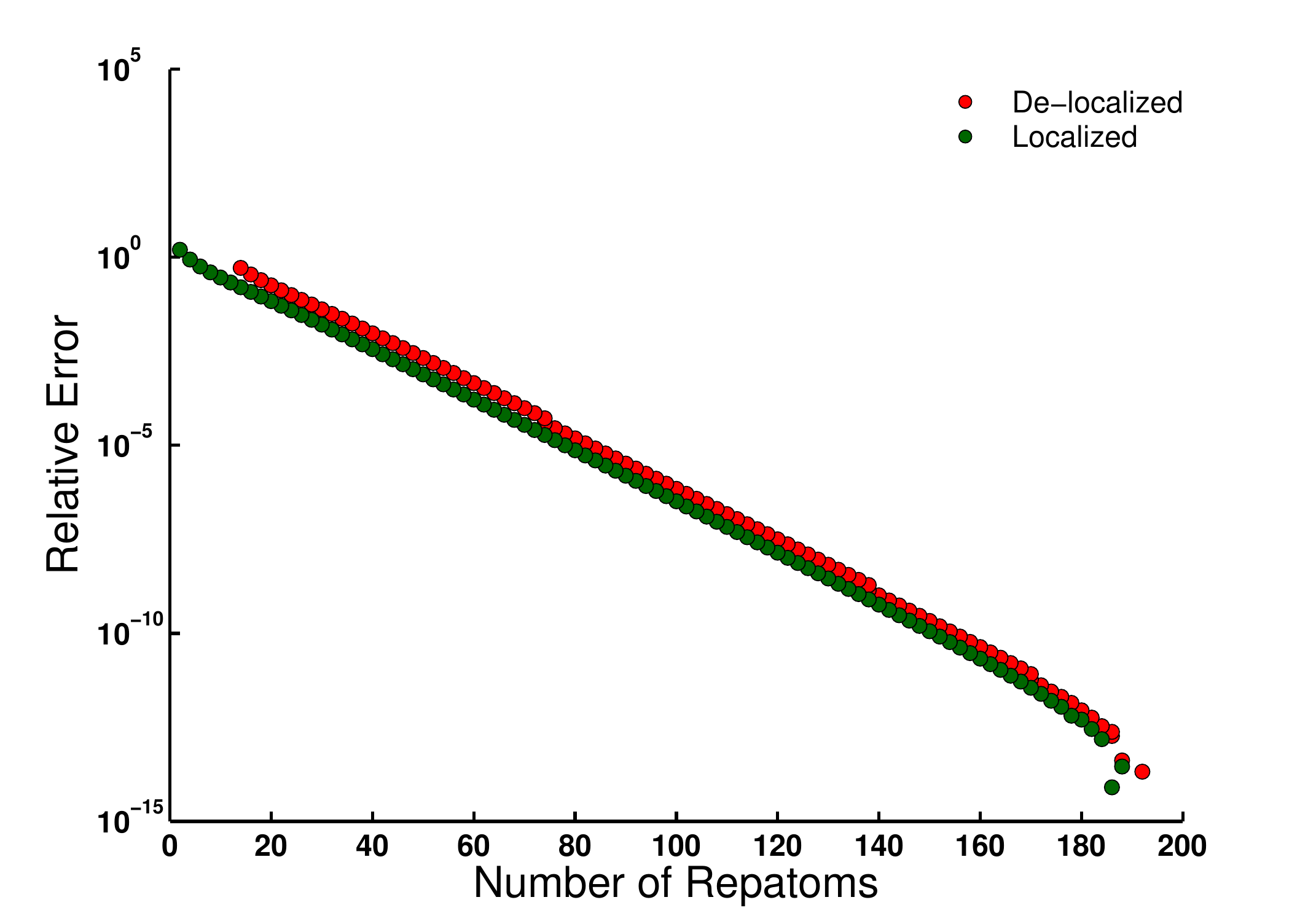}}
\end{center}
\caption{Relative error in the TST rate approximation, computed from
$\sqrt{\lambda^{\text{cg}}/\lambda^{\text{at}}} - 1$, for the localized and
delocalized coarse-graining schemes for two different tensile
strains.}\label{fig:FractureEigenvalueComparison}
\end{figure}

\begin{figure}[t]
\begin{center}
\subfigure[$s =
1.02$\label{fig:FractureDotProductComparison:Ex2}]{\includegraphics[width= 6.75
cm]{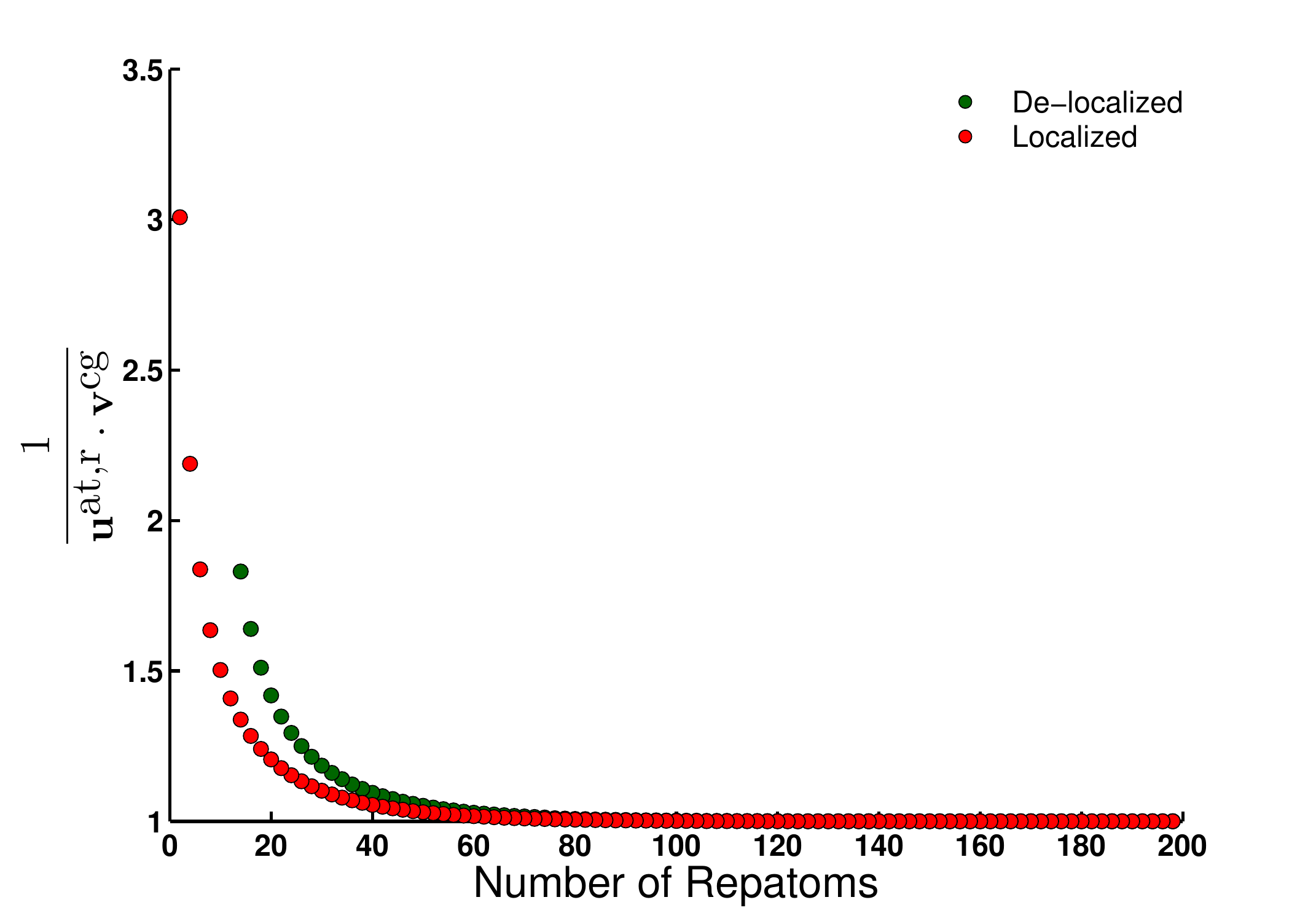}}
~
\subfigure[$s =
1.035$\label{fig:FractureDotProductComparison:Ex1}]{\includegraphics[width= 6.75
cm]{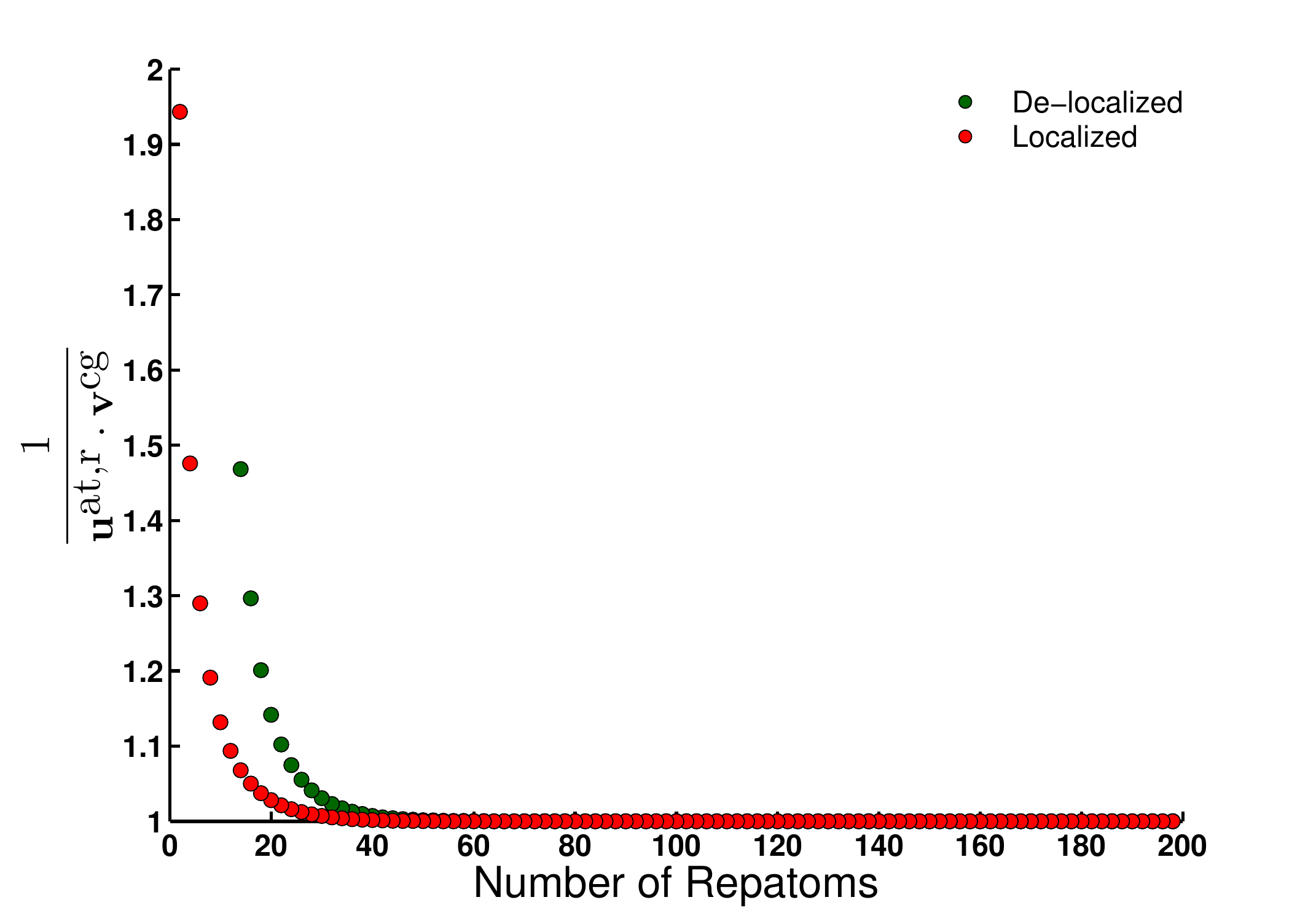}}
\end{center}
\caption{The denominator of the error term from Theorem
\ref{Thm:ErrorBound} for the localized and delocalized approaches to coarse
graining for two different tensile strains that measure the rotation of the
dividing surface and the relevant portions of the transition captured in the
repatom region. Ideally, this term should be equal to
1.}\label{fig:FractureDotProductComparison}
\end{figure}

\begin{figure}[t]
\begin{center}
\subfigure[$s =
1.02$\label{fig:FractureLongRangeComparison:Ex2}]{\includegraphics[width= 6.75
cm]{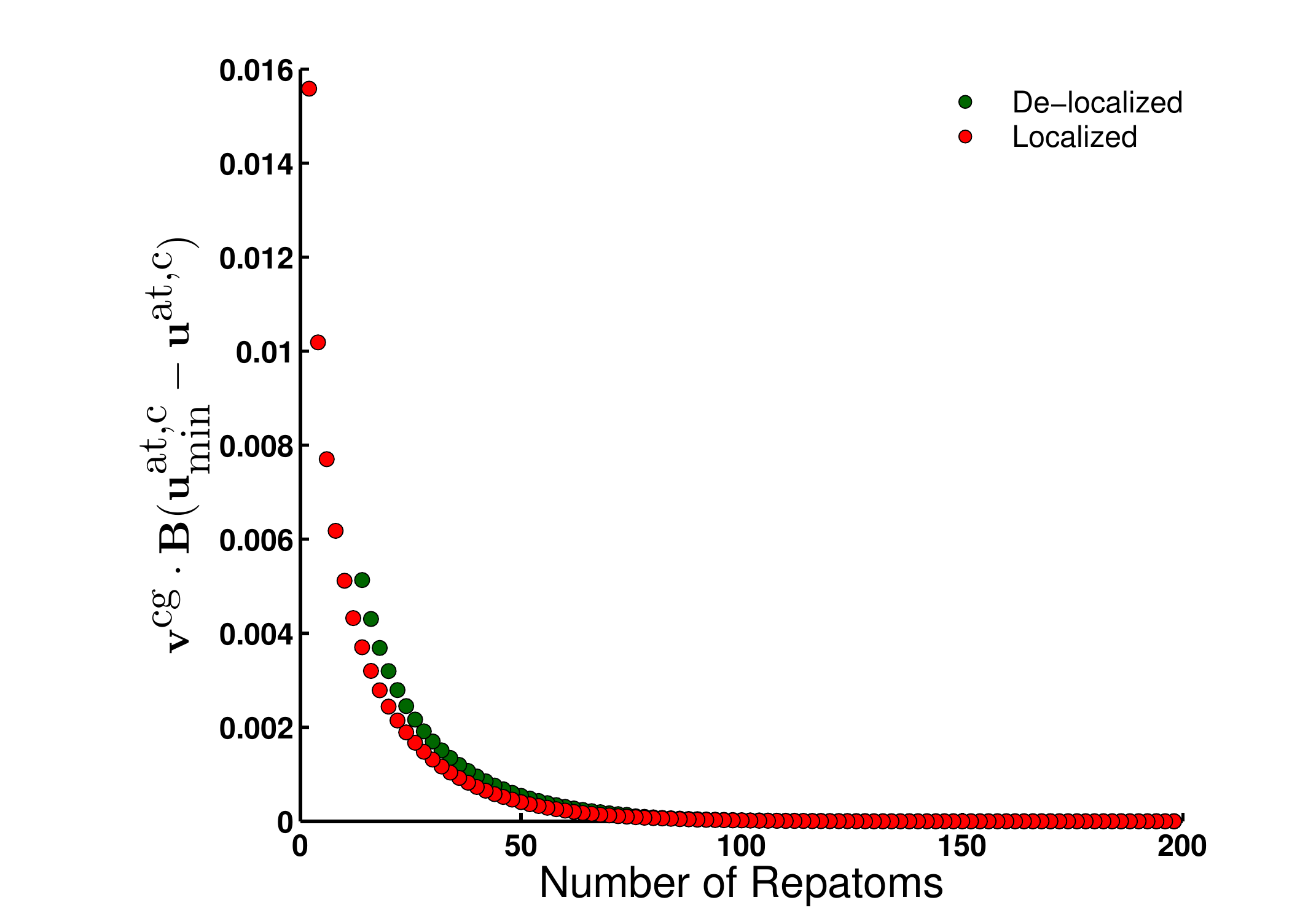}}
~
\subfigure[$s =
1.035$\label{fig:FractureLongRangeComparison:Ex1}]{\includegraphics[width= 6.75
cm]{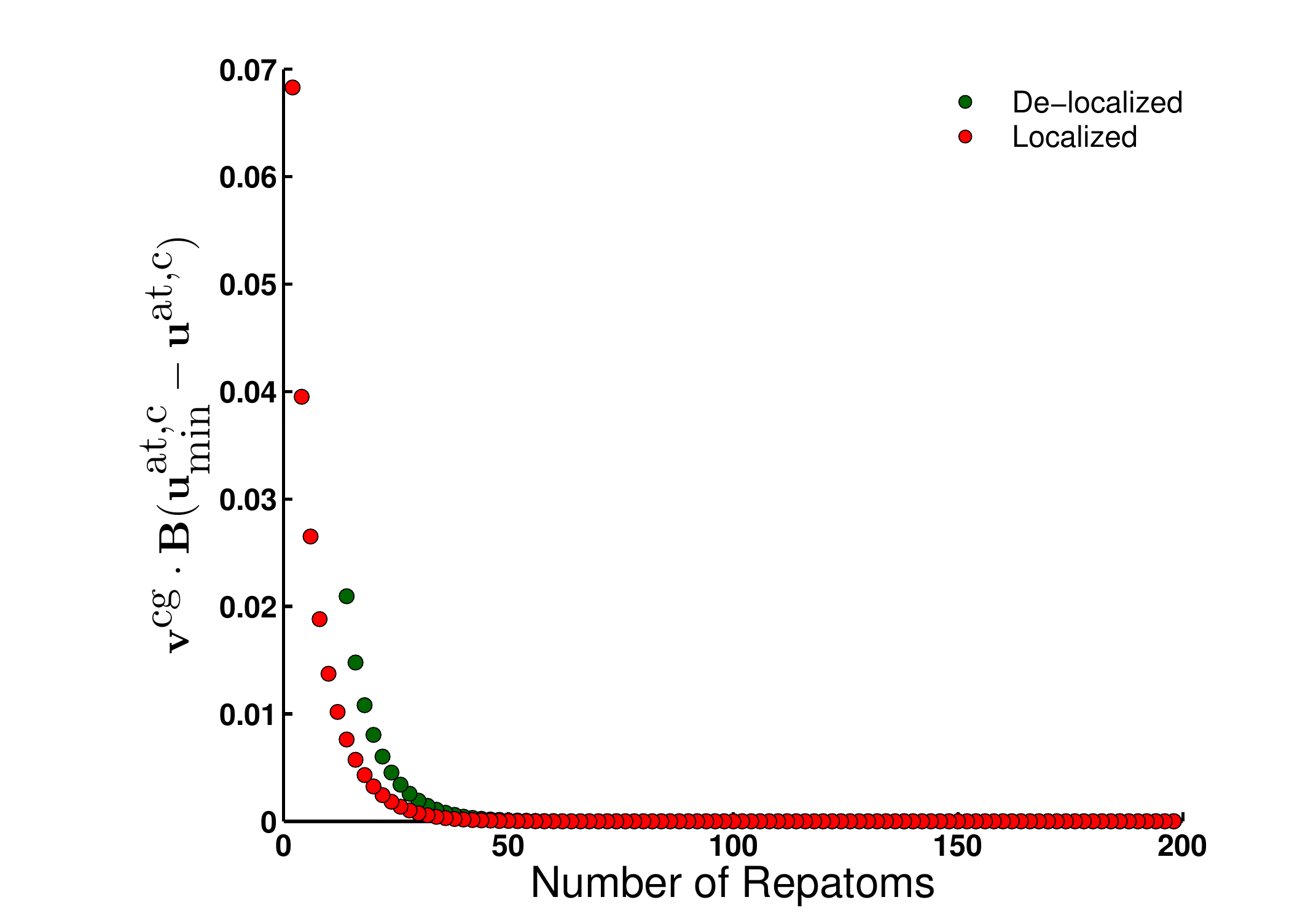}}
\end{center}
\caption{Long-range elastic contribution to the error from Theorem
\ref{Thm:ErrorBound} for the localized and delocalized approaches to
coarse-graining for two different tensile
strains.}\label{fig:FractureLongRangeComparison}
\end{figure}

In this section, we seek to verify through numerical experiments that the
coarse-graining method described in this paper is indeed able to accurately
reproduce the fully atomistic TST rate and to compare the error between
qualitatively different approaches to coarsening a system with a
localized region of interest.  For the first coarse-graining scheme, the repatom region will be
chosen with the intent of maximizing the magnitude of the projection
of the unstable eigenmode onto the repatom space.  As the region of interest is
localized, this approach implies that the repatoms should be concentrated in this
region as well.  In the second coarse-graining scheme, the repatom region will consist of a core
region in the localized region of interest with additional repatoms placed
throughout the remainder of the system so as to better capture the
long-range effect that the constrained region has on the
repatoms.  We will refer to these coarse-graining schemes as the localized repatom mesh and
delocalized repatom mesh schemes, respectively.

The system that will be considered in the numerical experiments is a 1-D chain
of atoms with fixed endpoints.  Only nearest-neighbor interactions are
considered.  All of the atoms in the chain interact through the
same potential except for those atoms which form the central bond,
whose interaction potential is made weaker. We are interested
in the rate at which this weakened bond breaks, causing the fracture
of the chain. The process is expected to primarily involve the atoms nearest to
the central bond, resulting in a localized transition.  The localized nature of
this transition will be demonstrated in the numerical experiments.
Note that this system is closely related to that studied in \cite{hyperqc}.

The 1-D chain consists of 202 atoms.
The energy contribution due to the central bond in this chain will take the form
of a Lennard-Jones potential:
\begin{equation*}
 \mathcal{V}_{c}(r)
 =
 4\varepsilon\left(\left(\frac{\sigma}{r}\right)^{12} -
                   \left(\frac{\sigma}{r}\right)^{6}\right),
\end{equation*}
where $\varepsilon = 1$, $\sigma = \frac{1}{2^{1/6}}$, and $r$ is the length
of the bond.  The remaining bonds in the chain are treated as harmonic
springs; i.e., the potential energy contribution of a single bond is
of the form:
\begin{equation*}
 \mathcal{V}(r)
 =
 \frac{1}{2}(r - 1)^{2}.
\end{equation*}
Note that the equilibrium bond
length for this potential is 1.  Letting $\mathbf{q} = (q_{i})_{i=0}^{201}$
denote the position of the atoms in the chain ($q_{i}$ indicating
the position of the $(i + 1)$-th atom), the total energy of the chain is
\begin{equation*}
 \mathcal{V}_{\text{chain}}(\mathbf{q})
 =
 \mathcal{V}_{c}(q_{101} - q_{100})
 +
 \sum_{i=1, i \neq 101}^{201}\mathcal{V}(q_{i} - q_{i-1})
 .
\end{equation*}
We set the boundary conditions for the endpoints of the chain so that $q_{0}
= 0$ and $q_{201} = 201s$, where $s = 1.02$ or $s = 1.035$.
The purpose of the scalar $s$ is to impose a tensile strain on the
chain and make fracture energetically favorable.  Changing the tensile strain
affects the degree of locality of the transition region.

The accuracy of the approximation of the TST rate will be discussed in terms of
 \eqref{Eq:RelativeErrorResult}, so we need only compute the negative
eigenvalues of the dynamical matrices discussed in the previous
sections.  Note that we use the same notation for the dynamical
matrices, eigenmodes, eigenvalues, etc., in this section as we have before.  To
determine the negative eigenvalue belonging to $\mathbf{D}^{\at}$, we first
compute the saddle point or transition state, $\mathbf{q}_{s}$, of the system
just described.  In our numerical simulations, we found the saddle point by
slowly stretching the weakened bond at the center of the chain while
simultaneously relaxing the remaining atoms until all of the forces in the chain were found to be
zero within a given tolerance. We then numerically compute
$\mathbf{D}^{\at}$ and diagonalize it to determine both $\lambda^{\at}$
and $\mathbf{u}^{\at}$.  The unstable eigenmode $\mathbf{u}^{\at}$ is shown in
Figure \ref{fig:FractureUnstableEigenmode} for the two different tensile
strains. It is clear from the picture that this transition is fairly well
localized in both cases, as the atoms closest to the central bond
are the largest contributors to the norm of $\mathbf{u}^{\at}$.
Physically, this unstable eigenmode simply shows that as the unbroken chain
crosses over to the broken state, the central atoms in the two regions move away
from one another as indicated by the displacements in the eigenvector.  When
the strain is greater, the fracture of the chain is more localized.

For this problem, it is possible to analytically determine the saddle point.
Before we begin with the derivation of the saddle point, recall that the length
of the chain under consideration is $201s$. For clarity, the length of the chain
will be denoted by $L$ in this section of the analysis.  Now, due to the
symmetry of the system about the central bond, we expect the saddle point to
display a similar symmetry. That is, we expect that $q_{i} - q_{0} = q_{201} -
q_{201 - i}$ for $0 \leq i \leq 100$.  As a consequence of this result, we can
write the central bond length solely in terms of $q_{100}$.  Explicitly,
$q_{101} - q_{100} = L - 2q_{100}$.  Because of the strictly convex nature of
the spring potential and the symmetry in the atomic positions, it is also
possible to show that every bond length governed by the spring potential, there
are 200 such bonds, is equal to the same value.  The bond lengths partition the
length of the chain minus the central bond length, so we may compute the bond
length to be $\frac{L - (L - 2q_{100})}{200} = \frac{q_{100}}{100}$.  With this
result and the alternate formula for the central bond length, we have reduced
the problem of computing the saddle point down to simply computing $q_{100}$.
To compute this value, let us consider the balance of the forces on the 100th
atom in the chain.  This atom is part of the central bond and interacts via the
Lennard-Jones potential with the 101st atom, but it interacts by the spring
potential with the atom with index 99.  At the saddle point, these forces
should cancel.  Thus, the force balance equation is
\begin{equation*}
 (q_{100} - q_{99}) + 4\varepsilon\left(12\frac{\sigma^{12}}{(q_{101} -
q_{100})^{13}} - 6\frac{\sigma^{6}}{(q_{101} - q_{100})^{7}}\right) = 0.
\end{equation*}
Substituting our results for the spring bond length and the central bond
length, this equation becomes
\begin{equation*}
 \frac{q_{100}}{100} + 4\varepsilon\left(12\frac{\sigma^{12}}{(L -
2q_{100})^{13}} -
6\frac{\sigma^{6}}{(L - 2q_{100})^{7}}\right) = 0
\end{equation*}
This non-linear equation can be turned into a 14-degree polynomial.  The roots
of this resulting polynomial that lie in $(0, \frac{1}{2}L)$ give the
possible values of $q_{100}$ in the saddle point.  As this is the only position
necessary to determine the location of every atom in the saddle point, the roots
of the polynomial give the transition state of the problem.

Note that with the bond lengths between the atoms in the chain known, we can
derive an analytical expression for the unstable eigenvector
$\mathbf{u}^{\at}$ in terms of the negative eigenvalue $\lambda^{\at}$.  To see
this, let $\mathbf{u}^{\at}_{100}$ be the displacement of the 100th atom, which
is the
leftmost atom that interacts via the Lennard-Jones potential, and recall that
the endpoints of the chain are fixed, so we can take the displacement
$\mathbf{u}^{\at}_{0} = 0$.  Applying $\mathbf{D}^{\at}$ to the unstable
eigenmode
$\mathbf{u}^{\at}$ and assuming that $\mathbf{u}^{\at}_{100}$ is known, we see
that
the displacements in the eigenmode for $\mathbf{u}^{\at}_{i}$ for $0 < i < 100$
may be
determined from a second-order difference equation with two boundary conditions
given by $\mathbf{u}^{\at}_{0}$ and $\mathbf{u}^{\at}_{100}$. Specifically, we
have that
\begin{equation*}
 -\mathbf{u}^{\at}_{i-1} + 2\mathbf{u}^{\at}_{i} - \mathbf{u}^{\at}_{i+1} =
\lambda^{\at}\mathbf{u}^{\at}_{i} \quad \text{for} \; 0 < i < 100
\end{equation*}
with $\mathbf{u}^{\at}_{100}$ taken to be some constant to be determined from
normalization and $\mathbf{u}^{\at}_{0} = 0$. Solving this difference equation
yields the solution
\begin{equation*}
 \mathbf{u}^{\at}_{i} = \alpha r_{+}^{i} + \beta r_{-}^{i} \quad \text{for} \;
 0 \leq i \leq 100,
\end{equation*}
where
\begin{equation*}
 \alpha
 =
 \frac{-\mathbf{u}^{\at}_{100}}{r_{-}^{100} - r_{+}^{100}},
 \quad
 \beta
 =
 \frac{\mathbf{u}^{\at}_{100}}{r_{-}^{100} - r_{+}^{100}},
\end{equation*}
and
\begin{equation*}
 r_{\pm}
 =
 1 - \frac{\lambda^{\at}}{2} \pm
 \sqrt{\frac{\lambda^{\at}}{2}\left(\frac{\lambda^{\at}}{2} - 2\right)}.
\end{equation*}
Note that $r_{+} > 1$ while $r_{-} < 1$.  Due to the symmetry of the problem, we
can get a similar equation for the displacements on the right-hand side of the
chain.  The displacements of the two central atoms that interact via the
Lennard-Jones potential are determined from the normalization of the eigenmode
and the fact that $\mathbf{u}^{\at}_{100} = -\mathbf{u}^{\at}_{101}$, which is
due to the symmetry intrinsic to the problem.




As stated previously, the localized repatom coarse-graining scheme intends to
maximize $\|\mathbf{u}^{\at,r}\|$.  This was accomplished in the numerical
experiments by constraining a continuous line of atoms at one end of the chain
and the mirror image of this grouping at the chain's other end, leaving a
contiguous repatom region at the center of the chain.  The total number of
repatoms was then varied. For the delocalized coarse-graining scheme, the selection of the
repatom region began with the inclusion of a contiguous region of repatoms
centered around the central bond. Additional repatoms were placed in the
periphery with the spacing between them increasing geometrically moving away
from the core region. Specifically, the spacing was doubled after starting with
a single constrained atom between the core region and the first repatom in the
periphery.  Following this, there would be two, then four, eight, etc.,
constrained atoms between each pair of repatoms until the end of the chain was
reached.  In symbols, we may write the set of indices for the $N$ repatoms in
the localized coarse-graining scheme in the following way:
\begin{equation*}
 \text{Localized Indices}(N) =
   \left\{100 - \ell : 0 \leq \ell \leq \frac{1}{2}N - 1 \right\}
   \bigcup \left\{101 + \ell : 0 \leq \ell \leq \frac{1}{2}N - 1 \right\}.
\end{equation*}
Of course, here, $N$ must be an even integer with $2 \leq N \leq 200$.  In
symbols, we may write the set of indices for the repatoms in the delocalized
coarse-graining scheme with $N$ repatoms in the core in the following way:
\begin{align*}
 \text{De}&\text{-localized Indices}(N) = \text{Localized Indices}(N)
   \\ \bigcup &
   \left\{100 - \left(\frac{1}{2}N - 1\right) - 2^{\ell} - (\ell - 1) :
     \ell \geq 1 \; \text{and}
   \; 100 - \left(\frac{1}{2}N - 1\right) - 2^{\ell} - (\ell - 1) > 0
   \right\}
   \\ \bigcup &
   \left\{101 + \left(\frac{1}{2}N - 1\right) + 2^{\ell} + (\ell - 1) :
     \ell \geq 1 \; \text{and}
   \; 101 + \left(\frac{1}{2}N - 1\right) + 2^{\ell} + (\ell - 1) < 201
     \right\}.
\end{align*}
The repatom configuration generated by this method is symmetric about
the central bond for the delocalized coarse-graining scheme. The number of repatoms in the core
region was then also varied. An illustration of these two coarse-graining
schemes is provided in Figure \ref{fig:MeshSchemes}.

Once a repatom set was defined for the experiment, $\mathbf{D}^{\at}$
and  \eqref{Eq:CGDynMatrix} were used to directly compute
$\mathbf{D}^{\cg}$. A diagonalization of this matrix then yielded
$\lambda^{\cg}$ and $\mathbf{v}^{\cg}$.  A comparison of the unstable
eigenmodes for the two coarse-graining schemes and the fully atomistic
unstable eigenmode for a given resolution are shown in Figure
\ref{fig:FractureCGUnstableEigenmode}.  The markers in the graph denote which
atoms were included in the repatom region for the experiments and provide
another illustration of the difference between the repatom regions used in the
localized and delocalized repatom mesh methods.

The relative error of the HTST
approximation, or $\sqrt{\frac{\lambda^{\cg}}{\lambda^{\at}}} - 1$, is shown in
Figure \ref{fig:FractureEigenvalueComparison} for varying numbers of
repatoms and for the two tensile strains.  The results show that the relative rate error
decreases extremely rapidly, i.e., roughly exponentially in this case, with
increasing numbers of repatoms. Achieving relative rate errors of less than 1\%
requires only about 40 to 50 degrees of freedom for both cases for $s=1.035$.  Further, the
localized coarse-graining scheme is seen to outperform the delocalized coarse-graining scheme for all
meshes we investigated although the difference is smaller for the more
delocalized transition.

To understand the cause of the difference in the accuracy of the two
methods, we look to Theorem \ref{Thm:ErrorBound} for the principle components
of the error.  In Figure \ref{fig:FractureDotProductComparison}, we see a
combination of the error due to not fully resolving some of the more essential
atoms in the transition and of the rotation of the dividing surface in the
coarse-grained phase space, reflected in the term ${\mathbf{u}^{\at,r} \cdot
\mathbf{v}^{\cg}}$, while the the error due to the long-range elastic
contributions, given by $\mathbf{v}^{\cg} \cdot
{\mathbf{B}(\mathbf{u}^{\at,c}_{\text{min}} -  \mathbf{u}^{\at,c})}$,
is shown in Figure \ref{fig:FractureLongRangeComparison}.  In both cases, the
localized coarse-graining scheme is seen to be preferable.  That the localized coarse-graining scheme had a
lower error contribution from the ${\mathbf{u}^{\at,r} \cdot
\mathbf{v}^{\cg}}$ term was expected
given that the aim of this coarse-graining scheme is the maximization of
$\|\mathbf{u}^{\at,r}\|$ by concentrating the repatoms in the region
where the components of $\mathbf{u}^{\at}$ are the largest in terms of absolute
value.  In contrast, the delocalized coarse-graining scheme distributes some repatoms away from
the fracture point, where the components of the unstable eigenmode are smaller
in norm, hence leading to suboptimal performance in this case. The better
performance of the localized method over the delocalized method in the case of
the long-range error is much more surprising and deserves extra attention as the
delocalized method was meant to reduce this error specifically.

To that end, let us consider the $\mathbf{v}^{\cg} \cdot
{\mathbf{B}(\mathbf{u}^{\at,c}_{\text{min}} -  \mathbf{u}^{\at,c})}$ term in
more detail by first considering the relaxed constrained configuration, or
$\mathbf{u}^{\at,c}_{\text{min}}$, for the two coarse-graining schemes.  The relaxed constrained
configuration is especially easy to determine for the present potential in both
cases as the potential is strictly convex in the periphery. For a line of
constrained atoms between two repatoms in this 1-D system, the relaxed
configuration is simply given by a linear interpolation of the displacement
between the two repatoms.  Therefore, we can compute
$\mathbf{u}^{\at,c}_{\text{min}}$ through a simple linear interpolation
between the nodes in the two coarse-graining schemes in the periphery.  This allows for an easy
comparison of $\mathbf{u}^{\at,c}_{\text{min}}$ and $\mathbf{u}^{\at,c}$.
The linear interpolation between the repatom components of the
coarse-grained unstable eigenmode is reported in Figure
\ref{fig:FractureCGUnstableEigenmode}.  It is quite evident from this viewpoint
that the approximation of the constrained region by the localized method is
worse than the approximation due to the delocalized repatom coarse-graining scheme.
The additional nodes in the periphery for the delocalized coarse-graining scheme help
better capture the long-range behavior of the system.  This difference is,
however, mitigated by the fact that we are here only considering
nearest-neighbor interactions. Therefore, the only difference in the constrained
region approximation that truly matters is the difference for the constrained
atoms that directly interact with the repatom region.  This is reflected in the
error derived in Theorem \ref{Thm:ErrorBound} through the kernel of
$\mathbf{B}$. For systems with longer-range interactions, it is conceivable that
this long-range effect error may become considerably more important.  With all
this in mind, the delocalized method should still perform better than the
localized method in terms of the $\|\mathbf{B}(\mathbf{u}^{\at,c}_{\text{min}}
-  \mathbf{u}^{\at,c})\|$ norm.  This is, in fact, the case.  The localized
method however performs better in the long-range error due to the contribution of the
dot product with $\mathbf{v}^{\text{cg}}$. As mentioned above, in the localized case, the
atoms with the largest values of $\mathbf{v}^{\cg}$ do not contribute
to the error as they are not coupled to the constrained atoms through
$\mathbf{B}$. They are thus effectively shielded by the core region
and only a small number of rep-atoms eventually contribute to the error.
In contrast, in the delocalized case, a larger number of repatoms
with significant values of  $\mathbf{v}^{\cg}$ contribute, tilting the
balance in favor of the localized coarse-graining scheme.

Another key point to keep in mind when considering the long-range error is the
mesh used in the periphery.  The mesh used in the example above is not the
optimal mesh for this system and was chosen instead as a realistic coarse-graining scheme to
use without knowing exactly the unstable eigenmode.  For this problem, many of
the degrees of freedom in the coarse-graining scheme do not contribute much to reducing the
long-range error, and this negatively affects the delocalized coarse-graining scheme in a
degree of freedom comparison against the localized coarse-graining scheme.  A more optimum
choice of repatoms in the periphery for the delocalized coarse-graining scheme would make the
comparison more favorable.  Theorem \ref{Thm:ErrorBound} could be used as a
starting point for a derivation of an optimal mesh as is done in
\cite{acta.atc} for quasicontinuum methods, but the dot products in
the error
present challenges to the derivation.  Using the Cauchy-Schwarz inequality
could help to alleviate this issue; however, the resulting bound is not a
good approximation to the actual result and the resulting mesh is suboptimal.
We can still consider better, if not optimal, meshes though to see if the
delocalized coarse-graining scheme will better handle the long-range error contribution.
For the simple delocalized coarse-graining scheme with a single repatom in the periphery
on either side of the chain with only a single constrained atom between these
peripheral atoms and the core, the delocalized method does indeed become
superior compared to the localized mesh in the long-range error for small core
regions. However, the localized coarse-graining scheme still ultimately had a lower relative
error even in this case.  The localized coarse-graining scheme was found to always have a lower
relative error than the delocalized coarse-graining scheme in all of the meshes examined in
the numerical experiments. Note that in actual implementations, the
delocalized coarse-graining scheme might possess other practical advantages. For
example, in 1D with nearest-neighbor interactions, $\mathbf{C}$ would
exhibit a block structure that could potentially be exploited.


Overall, for the original system considered here, the localized method is
superior in all aspects of the error. Consequently, choosing repatoms so as to
increase $\|\mathbf{u}^{\at,r}\|$ is the optimal strategy.  In higher
dimensions, this may change as the boundary region between the localized repatom
and periphery becomes more significant.

\section{Conclusions and Future Considerations}

In this paper, we have demonstrated that the CGMD approach to
atomistic coarse-graining can produce an accurate approximation to the TST rate
of the fully atomistic system.  Over the course of the analysis, we verified
that the coarsened system is well-behaved in that no spurious behaviors are
introduced through the coarsening process, and we described the projection of
the dividing surface into the coarse-grained phase space.  The error analysis
was extended to show under which circumstances the coarse-grained approximation
of the TST rate would be most accurate and highlighted the significant
contributions to the error in this estimation.  Our numerical results
demonstrated the accuracy of two different approaches to the coarse-graining
approximation in the context of a 1-D chain undergoing fracture.
The success of these approaches demonstrated that the
number of degrees of freedom taken into account to approximate the TST rate
could be significantly reduced while still maintaining a highly accurate
approximation.   While the localized method proved to be superior in the
numerical experiments performed here, the accuracy of the approximations made
by the two methods were comparable.  This is important to note because the
implementation of the delocalized method may be more efficient in certain
situations.

It is interesting to note that the analysis and error calculations for
this method are independent of the basis that is chosen for the problem.  While
it is certainly natural to consider a basis consisting solely of
individual atoms, it may be possible in certain situations to choose a basis for
the problem that further decreases the relevant number of degrees of
freedom such as the continuous, piecewise linear basis functions used in
\cite{PhysRevB.72.144104} and \cite{EBTadmor:2013,hyperqc}. Theoretically, it is possible to initially choose
an ideal basis consisting of the eigenvectors for the dynamical matrix at the
saddle point as described earlier in the paper.  In such a situation, only a
single basis element would contribute in any way to the TST rate allowing for a
coarsening of the system down to a single element while still maintaining a
perfect approximation of the TST rate.  While computing such an ideal basis is
usually not practical, especially if the point of the simulation is to
discover the appropriate escape transition \cite{PUSAV2009}, there may be
alternative choices for a basis that are relatively easy to work with, apply to
certain classes of problems, and still make the behavior of interest
increasingly localized.  For future consideration, it would also be interesting
to investigate the effect that longer-range interactions have on the
constrained region's contribution to the overall error, as discussed in the
numerical results section.

%

\end{document}